\numberwithin{equation}{section}
\newtheorem{theorem}{Theorem}[section]
\newtheorem{lemma}[theorem]{Lemma}
\newtheorem{proposition}[theorem]{Proposition}
\newcommand{\eps}{\varepsilon}
\title{Soliton dynamics for the Schr\"odinger-Newton system}
\author[P.\ d'Avenia]{Pietro d'Avenia}
\address{Dipartimento di Meccanica, Matematica e Management
\newline\indent 
Politecnico di Bari
\newline\indent
Via Orabona 4,  70125  Bari, Italy}
\email{p.davenia@poliba.it}
\author[M.\ Squassina]{Marco Squassina}
\address{Dipartimento di Informatica
\newline\indent
Universit\`a degli Studi di Verona
\newline\indent
C\'a Vignal 2, Strada Le Grazie 15, 37134 Verona, Italy}
\email{marco.squassina@univr.it}
\thanks{Both the authors were supported by 2009 MIUR project:
   ``Variational and Topological Methods in the Study of Nonlinear Phenomena''. Moreover, the first author was supported also by the 2012 INDAM project ``Metodi Variazionali e Problemi Ellittici Non Lineari'' and 2011 FRA project ``Onde solitarie''. This work has been partially carried out during a stay of P. d'Avenia in Verona. He would like to express his deep gratitude to the Dipartimento di Informatica for the warm hospitality.}%e grazie a DOMINIQUE
\subjclass[2000]{35Q51, 35Q40, 35Q41}
\keywords{Soliton dynamics, Schr\"odinger-Newton system, modulational stability, ground states.}
\begin{document}

\begin{abstract}
	We investigate the soliton dynamics for the Schr\"odinger-Newton system
	by proving a suitable modulational stability estimates in the spirit of
	those obtained by Weinstein for local equations.
\end{abstract}

\maketitle

\section{Introduction}
\noindent
Let us consider the Schr\"odinger-Newton system
\begin{equation}
\label{eq:SNS1}
\begin{cases}
 i\hbar \partial_t u=
-\frac{\hbar^2}{2m} \Delta u + V(x) u - \phi u,  &\\
-\Delta \phi= 4\pi\gamma |u |^2,     &
\end{cases}
\end{equation}
where
$u:[0,\infty)\times\mathbb{R}^3\to\mathbb{C}$, 
$\phi:\mathbb{R}^3\to\mathbb{R}$ is 
the gravitational potential, $V:\mathbb{R}^3\to\mathbb{R}$ is 
an external potential, $m$ is the mass of the particle, 
$\gamma=Gm^2$, where $G$ is the Newton constant.
Up to suitable rescalings, \eqref{eq:SNS1}
%\vskip50pt
%Rescaling $\psi$
%($\psi \to \frac{1}{\hbar\sqrt{\gamma}} \psi$) and since
%the Poisson equation $-\Delta \phi= \sigma |\psi |^2$,
%$\sigma\in\mathbb{R}$, has the unique solution 
%$\phi= 
%\frac{\sigma}{4\pi}\left(\frac{1}{|\cdot|}* |\psi |^2\right)$,
%then system \eqref{eq:SNS1}
%
%---------------
%
%if we take
%\[
%\psi \to \frac{1}{\hbar \sqrt{8\pi\gamma m}} \psi,
%\quad
%V \to \frac{1}{2m} V,
%\quad
%\phi \to \frac{1}{2m} \phi,
%\]
%system \eqref{eq:SNS1} becomes
%
%------------------
%
%\begin{equation}
%\label{eq:SNS2}
%\left\{
%\begin{array}{l}
%i\hbar \partial_t \psi= -\hbar^2 \Delta \psi 
%+ V(x) \psi - \phi \psi\\
%-\hbar^2\Delta \phi= |\psi |^2
%\end{array}
%\right.
%\quad
%\hbox{in }
%[0,+\infty[\times\mathbb{R}^3.
%\end{equation}
%Since the second equation has a unique solution
%\[
%\phi=
%\frac{1}{4\pi\hbar^2}\left(\frac{1}{|x|}* |\psi |^2\right),
%\]
%then system \eqref{eq:SNS2} 
%can be written as
%\begin{equation}
%\label{eq:SNS3}
%i\hbar \partial_t \psi= - \frac{\hbar^2}{2m} \Delta \psi 
%+ V(x) \psi - \frac{1}{\hbar^2}
%\left(\frac{1}{|x|}* |\psi |^2\right)\psi
%\quad
%\hbox{in }
%[0,+\infty[\times\mathbb{R}^3.
%\end{equation}
%ignoring the other 
%constant coefficients, 
can be rewritten as the equation
\begin{equation}
\label{eq:eqn}
i\eps \partial_t u^\eps= -\frac{\eps^2}{2} \Delta u^\eps + V(x) u^\eps 
- \frac{1}{\eps^2} \Big(\frac{1}{|x|}* |u^\eps |^2\Big)u^\eps
\quad\,\,
\hbox{in }
[0,\infty)\times\mathbb{R}^3.
\end{equation}
This equation was originally elaborated by
Pekar \cite{pekar} around 1954 in the framework of quantum mechanics.
Subsequently, in 1976, Choquard \cite{Lieb} adopted the equation as an approximation of the Hartree-Fock
theory. More recently, in 1996, Penrose \cite{penrose} settled it as a model of self-gravitating matter.
From the point of view of global well-posedness and smoothness for arbitrary initial 
data $u^\eps_0\in H^1({\mathbb R}^3,\mathbb{C})$, the Cauchy problem associated with 
\eqref{eq:eqn} was completely investigated in \cite{cazenave}. Concerning the existence and qualitative
properties of the associated standing wave solutions, we refer the reader to the classical
contributions by Lions \cite{Lions1,Lions2} on concentration compactness (see also \cite{morozschaft} for a more general situation). For what regards orbital stability
of solutions to \eqref{eq:eqn} -- for a fixed $\eps$ -- and with respect to a suitable family of ground states, 
we refer to the contribution due to Cazenave and
Lions \cite[Theorem IV.2]{cazlio} and those by Grillakis and Shatah \cite{gril1,gril2}. 
Years later, in the frame of stability theory for local Schr\"odinger
equation
\begin{equation}
\label{local-eq}
i\eps \partial_t u^\eps= -\frac{\eps^2}{2} \Delta u^\eps + V(x) u^\eps 
- |u^\eps |^{2p}u^\eps
\quad\,\,
\hbox{in }
[0,\infty)\times\mathbb{R}^3,\quad 0<p<\frac{2}{3},
\end{equation}
several contributions appeared about the study of the so called semi-classical
(or point particle) limit behaviour as the parameter $\eps$ vanishes, both for the standing waves
and the full evolutionary problem. Concerning the former, for local equations we refer to the monograph 
by Ambrosetti and Malchiodi \cite{ambr-malch} and to the references therein,
while for nonlocal equations, we refer to \cite{CSS} and to the related references. About the latter,
rigorous results about the soliton dynamics of local Schr\"odinger were obtained in various
papers, among which we mention the contributions 
by Bronski and Jerrard \cite{Bro-Jerr} and Keraani \cite{keraa} by means of arguments which are purely based
on the use of conservation laws satisfied by the equation and by the associated Newtonian
system $\ddot x(t)=-\nabla V(x(t))$, combined with the modulational stability estimates due to Weinstein
\cite{weinstein1,weinstein2}. With different techniques similar results were obtained in \cite{frolich1}
by Fr\"ohlich, Gustafson, Jonsson and Sigal (see also \cite{DV}). Roughly speaking, the soliton dynamics occurs when, choosing a suitable 
initial datum $u^\eps_0(x)=r((x-x_0)/\eps)$ the corresponding solution $u^\eps(t)$ mantains the shape
$r((x-x(t))/\eps)$, up to an estimable error and locally in time, in the transition from quantum
to classical mechanics, namely as $\eps\to 0$. For a nice survey on solitons and their stability features, see 
the work by Tao \cite{tao-solitons}. In the nonlocal case, the semiclassical limit
of the standing waves of \eqref{eq:eqn} was recently studied by Wei and Winter \cite{WW}. The full evolution problem
\eqref{eq:eqn} was studied in a soliton dynamics regime by Fr\"ohlich, Tsai and Yau in \cite{froltsaiyau}
along the line followed in \cite{frolich1} for the local case. 
On the contrary, to the best of the authors' knowledge,
there is no nonlocal counterpart of the study of point particle dynamics along the technique
initiated in the work by Bronski and Jerrard \cite{Bro-Jerr}. This is precisely the aim of this paper.
Let $r\in H^1(\mathbb{R}^3)$ be the unique radial, positive solution of 
\begin{equation}
\label{eq:lp}
-\frac{1}{2} \Delta r + r 
-  \Big(\frac{1}{|x|}* |r |^2\Big)r=0.
\end{equation}
The main tool exploited in \cite{Bro-Jerr,keraa} in the local case \eqref{local-eq}
is a kind of coercivity estimate for the differences ${\mathcal E}(\phi)-{\mathcal E}(r)$ 
upon suitable complex-valued functions $\phi$ such that $\|\phi\|_2=\|r\|_2$
for the energy functional ${\mathcal E}(\phi)=\|\nabla \phi\|_2^2/2 - \| \phi \|_{2p+2}^{2p+2}/(2p+2)$ associated 
with $- \Delta \phi /2 + \phi=|\phi|^{2p}\phi$ on $\mathbb{R}^3$, obtained by exploiting
the spectral properties of its linearized operator. 
The first main result of 
the paper is the validity of this property for the nonlocal equation \eqref{eq:lp}.
Precisely, let $\mathcal E:H^1(\mathbb{R}^3,\mathbb{C})\to\mathbb{R}$ be 
the energy functional defined by
\[
\mathcal{E}(\phi)=\frac{1}{2} \int | \nabla \phi|^2
- \frac{1}{2} \iint \frac{|\phi(x)|^2|\phi(y)|^2}{|x-y|}
\]
and $\|\cdot\|$ denote the $H^1(\mathbb{R}^3,\mathbb{C})$-norm.
Then we have the following
\begin{theorem}
\label{thm:enconv}
There exists a positive constant $C$ such that
\[
\mathcal{E} (\phi) - \mathcal{E} (r) \geq C
\inf_{x\in\mathbb{R}^3,\,\theta\in [0,2\pi)}
\| \phi - e^{i\theta} r(\cdot - x) \|^2
+ o \Big(
\inf_{x\in\mathbb{R}^3,\theta\in [0,2\pi)}
\| \phi - e^{i\theta} r(\cdot - x) \|^2 \Big),
\]
for every $\phi\in H^1(\mathbb{R}^3,\mathbb{C})$ such that 
$\| \phi \|_2 = \| r \|_2$ and $\!\!\inf\limits_{x\in\mathbb{R}^3,\,\theta\in [0,2\pi)}
\| \phi - e^{i\theta} r(\cdot - x) \|\leq \|r\|$.
\end{theorem}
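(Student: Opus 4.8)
The plan is to adapt the modulational-stability scheme of Weinstein \cite{weinstein1,weinstein2}, used in \cite{Bro-Jerr,keraa} for the local equation, replacing the spectral information on the linearized operator by its nonlocal counterpart. Put $\delta:=\inf_{x\in\mathbb R^3,\,\theta}\|\phi-e^{i\theta}r(\cdot-x)\|$; if $\delta=0$ there is nothing to prove, so assume $0<\delta\le\|r\|$. First I would check the infimum is attained: since $x\mapsto r(\cdot-x)$ is continuous from $\mathbb R^3$ into $H^1$ and $r(\cdot-x)\rightharpoonup 0$ as $|x|\to\infty$, one has $\|\phi-e^{i\theta}r(\cdot-x)\|^2\to\|\phi\|^2+\|r\|^2>\|r\|^2\ge\delta^2$ as $|x|\to\infty$ (uniformly in $\theta$, using $\|\phi\|_2=\|r\|_2>0$), so the infimum is attained at an interior critical point $(\bar x,\bar\theta)$ of $(x,\theta)\mapsto\|\phi-e^{i\theta}r(\cdot-x)\|^2$. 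Writing $w:=e^{-i\bar\theta}\phi-r(\cdot-\bar x)$ (so $\|w\|=\delta$) and translating by $\bar x$ — legitimate since $\mathcal E$, $\|\cdot\|$, $\|\cdot\|_2$ are translation invariant — I may assume $\bar x=0$ and work with $\tilde w=u+iv\in H^1(\mathbb R^3,\mathbb C)$, $\|\tilde w\|=\delta$, $\mathcal E(\phi)=\mathcal E(r+\tilde w)$. The Euler--Lagrange equations at $(\bar x,\bar\theta)$, together with $\langle r,\partial_{x_j}r\rangle=0$, give
\[
\langle v,r\rangle=0,\qquad \langle u,\partial_{x_j}r\rangle=0\quad(j=1,2,3)
\]
in the $H^1$ inner product, while the constraint $\|\phi\|_2=\|r\|_2$ becomes $2\langle r,u\rangle_2+\|\tilde w\|_2^2=0$, so $\langle r,u\rangle_2=O(\delta^2)$.

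\textbf{Step 2: reduction to the Hessian.} Next I would introduce the action $\mathcal S(\phi):=\mathcal E(\phi)+\|\phi\|_2^2$, for which \eqref{eq:lp} says precisely $\mathcal S'(r)=0$. By the constraint $\|r+\tilde w\|_2^2=\|r\|_2^2$, whence $\mathcal E(\phi)-\mathcal E(r)=\mathcal S(r+\tilde w)-\mathcal S(r)$. The functional $\mathcal S$ is of class $C^2$ on $H^1(\mathbb R^3,\mathbb C)$ — the only delicate term being the quartic one, controlled by the Hardy--Littlewood--Sobolev inequality and the embedding $H^1(\mathbb R^3)\hookrightarrow L^{12/5}(\mathbb R^3)$ — so Taylor's formula and $\mathcal S'(r)=0$ give
\[
\mathcal E(\phi)-\mathcal E(r)=\tfrac12\,\mathcal S''(r)[\tilde w,\tilde w]+o(\delta^2)=\tfrac12\big(\langle L_+u,u\rangle_2+\langle L_-v,v\rangle_2\big)+o(\delta^2),
\]
where a direct computation yields the linearized operators
\[
L_-=-\Delta+2-2\Big(\tfrac1{|x|}*r^2\Big),\qquad L_+=L_--4\,r\,\Big(\tfrac1{|x|}*(r\,\cdot\,)\Big).
\]

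\textbf{Step 3: spectral analysis and coercivity of the Hessian.} Both operators are self-adjoint on $L^2$, bounded below, with essential spectrum $[2,\infty)$, since $\tfrac1{|x|}*r^2\to 0$ at infinity. From \eqref{eq:lp}, $L_-r=0$, and since $r>0$ this is the simple lowest eigenvalue, so $L_-\ge 0$, $\ker L_-=\mathrm{span}\{r\}$, with a spectral gap on $\{r\}^{\perp_{L^2}}$. Differentiating \eqref{eq:lp} gives $L_+\partial_{x_j}r=0$; the key input is the nondegeneracy $\ker L_+=\mathrm{span}\{\partial_{x_1}r,\partial_{x_2}r,\partial_{x_3}r\}$, known for the ground state of \eqref{eq:lp} (see \cite{WW} and the references therein). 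Moreover $L_+$ has exactly one negative eigenvalue: testing \eqref{eq:lp} against $r$ gives $\langle L_+r,r\rangle_2=-4\iint\frac{r(x)^2r(y)^2}{|x-y|}<0$, while the minimality of $r$ for $\mathcal E$ subject to $\|\phi\|_2=\|r\|_2$ (Lions \cite{Lions1,Lions2}) forces $L_+\ge 0$ on the $L^2$-orthogonal complement of $r$. Finally the rescaled family $r_\omega(x):=\omega\,r(\sqrt\omega\,x)$, solving $-\tfrac12\Delta r_\omega+\omega r_\omega-(\tfrac1{|x|}*r_\omega^2)r_\omega=0$, has $\|r_\omega\|_2^2=\sqrt\omega\,\|r\|_2^2$, with positive $\omega$-derivative at $\omega=1$; by the standard identity $\tfrac{d}{d\omega}\|r_\omega\|_2^2\big|_{\omega=1}=-4\langle L_+^{-1}r,r\rangle_2$ this is the Vakhitov--Kolokolov condition $\langle L_+^{-1}r,r\rangle_2<0$. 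With these three facts, the abstract coercivity lemma underlying \cite{weinstein1,weinstein2}, which uses only such spectral properties, yields $c_0>0$ with $\langle L_+\xi,\xi\rangle_2\ge c_0\|\xi\|^2$ whenever $\xi\perp_{L^2}r$ and $\xi\perp_{L^2}\partial_{x_j}r$, and $\langle L_-\eta,\eta\rangle_2\ge c_0\|\eta\|^2$ whenever $\eta\perp_{L^2}r$.

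\textbf{Step 4: transferring the orthogonality, and conclusion.} The orthogonalities produced in Step 1 hold in $H^1$, not in $L^2$, and $\langle r,u\rangle_2$ vanishes only up to $O(\delta^2)$, so the spectral inequalities are not directly applicable; I would bridge this gap by $L^2$-projection. Let $v_\perp$ (resp. $u_\perp$) be the $L^2$-orthogonal projection of $v$ (resp. $u$) off $r$ (resp. off $\mathrm{span}\{\partial_{x_j}r\}$). Since $L_-r=0$ and $L_+\partial_{x_j}r=0$, one has $\langle L_-v,v\rangle_2=\langle L_-v_\perp,v_\perp\rangle_2$ and $\langle L_+u,u\rangle_2=\langle L_+u_\perp,u_\perp\rangle_2$; and because $v\perp_{H^1}r$ and $u\perp_{H^1}\partial_{x_j}r$, a short computation gives $\|v_\perp\|\ge\|v\|$ and $\|u_\perp\|\ge\|u\|$. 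Finally $\langle u_\perp,r\rangle_2=\langle u,r\rangle_2=O(\delta^2)$, so a further $L^2$-projection of $u_\perp$ off $r$ perturbs it by $O(\delta^2)$ in $H^1$; plugging this into the $L_+$-inequality and absorbing the resulting cross terms, which are $O(\delta^3)$, gives $\langle L_+u,u\rangle_2\ge c_0\|u\|^2+o(\delta^2)$. Altogether
\[
\mathcal E(\phi)-\mathcal E(r)\ge\tfrac{c_0}{2}\big(\|u\|^2+\|v\|^2\big)+o(\delta^2)=\tfrac{c_0}{2}\,\delta^2+o(\delta^2),
\]
which is the assertion with $C=c_0/2$. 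I expect the hardest point to be Step 3: the remaining steps are soft analysis or routine bookkeeping, but the coercivity of $L_+$ rests on the nondegeneracy $\ker L_+=\mathrm{span}\{\partial_{x_j}r\}$, which for the nonlocal equation \eqref{eq:lp} is a nontrivial fact to be imported from the literature.
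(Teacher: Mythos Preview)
Your argument is correct and follows the same Weinstein modulational-stability scheme as the paper: achieve the infimum, extract orthogonality conditions from criticality, Taylor-expand the action $\mathcal S=\mathcal E+\|\cdot\|_2^2$ around $r$, and reduce to coercivity of $L_\pm$ on constrained subspaces. The one substantive difference is how the constraints are packaged and how $L_+$-coercivity is proved. The paper uses the equation $-\tfrac12\Delta r+r=(|x|^{-1}*r^2)r$ to rewrite your $H^1$-condition $\langle u,\partial_{x_j}r\rangle_{H^1}=0$ as the $L^2$-condition $(u,\Xi_j(r))=0$ with $\Xi_j(r):=\partial_{x_j}\big((|x|^{-1}*r^2)r\big)$, and proves coercivity of $L_+$ directly on $\mathcal V_0=\{(u,r)=(u,\Xi_j(r))=0\}$ (and of $L_-$ directly on $\{(v,r)_{H^1}=0\}$), so no Step~4 transfer is needed. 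Their $L_+$-argument is hands-on rather than abstract: assuming the infimum on $\mathcal V_0$ vanishes they produce a minimizer satisfying $L_+u=\mu r$, compute the explicit identities $L_+(x\cdot\nabla r)=-\Delta r$ and $L_+r=-2(|x|^{-1}*r^2)r$ to write $u=-\tfrac{\mu}{2}(r+x\cdot\nabla r)+\vartheta\cdot\nabla r$, and then the constraints together with $(x\cdot\nabla r,r)=-\tfrac32\|r\|_2^2$ yield a contradiction. Your route via the Vakhitov--Kolokolov/Weinstein machinery is equally valid and rests on the same deep input (nondegeneracy of $\ker L_+$); note, though, that once you have $L_+\ge0$ on $\{r\}^{\perp_{L^2}}$ from minimality, the VK condition is redundant --- coercivity then follows from nondegeneracy plus the fact that the essential spectrum of $L_+$ starts at $1$.
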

\noindent
By combining Lions's concentration-compactness \cite{Lions1,Lions2} with
\cite[(ii) of Theorem IV.1]{cazlio} and recalling the uniqueness of the {\em ground state} $r$, an equivalent formulation of Theorem~\ref{thm:enconv} 
could be given by dropping the $o(\cdot)$ term and adding instead the requirement that the 
difference $\mathcal{E} (\phi) - \mathcal{E} (r) $ be small enough. For local Schr\"odinger
equations with power nonlinearity, Theorem~\ref{thm:enconv} was proved in \cite{weinstein1,weinstein2}
while \cite{MPS} contains a proof for the result for one dimensional Schr\"odinger systems.
We shall prove Theorem~\ref{thm:enconv} in Section~\ref{proof-thm1} by virtue of a careful study
of the (real and imaginary) linearized operators $L_-$ and $L_+$ associated with \eqref{eq:lp}
on some subspaces of $H^1(\mathbb{R}^3,\mathbb{C})$ defined by suitable
orthogonality conditions. Once the estimate of Theorem~\ref{thm:enconv} holds true, a natural
application is to obtain the soliton dynamics behaviour, in the semi-relativistic limit $\eps\to 0$,
for the Cauchy problem
\begin{equation}
\label{eq:CP}
\left\{
\begin{array}{l}
\displaystyle i \eps \partial_t u^\eps= - \frac{\eps^2}{2} \Delta u^\eps 
+ V(x) u^\eps - \frac{1}{\eps^2}  
\Big(\frac{1}{|x|}* |u^\eps |^2\Big)u^\eps\\
u^\eps(0,x)=\displaystyle r\Big(\frac{x-x_0}{\eps} \Big) 
e^{\frac{i}{\eps} x\cdot v_0},
\end{array}
\right.
\end{equation}
where 
and $x_0\in\mathbb{R}^3$ and $v_0\in\mathbb{R}^3$ 
are, respectively, the initial position and velocity of
\begin{equation}
\label{eq:dynsys2}
\left\{
\begin{array}{l}
\dot{x}(t)=v(t),  \\
\dot{v}(t)= - \nabla V(x(t)), \\
x(0)=x_0,  \\
v(0)=v_0.
\end{array}
\right.
\end{equation}
Problem \eqref{eq:CP} is globally well-posed, provided that 
$V\in L^m(\mathbb{R}^3)+L^\infty(\mathbb{R}^3)$, for some $m>3/2$ \cite[Corollary 6.1.2 and Example 1]{cazenave}.
Denoting $\|\cdot\|_{{\mathcal H}_\eps}^2=
\frac{1}{\eps}\|\nabla\cdot\|_2^2+\frac{1}{\eps^3}\|\cdot\|_2^2$, we prove the following
\begin{theorem}
\label{thm:dyn}
Assume that $V=V_1+V_2$ with $V_1\in {\mathcal C}^3(\mathbb{R}^3)$ and $D^2V_2\in C^2(\mathbb{R}^3)$,
where $V_2$ is bounded from below. Therefore, for every $\eps$ small, we have 
\[
\Big\|u^\eps(t,x)-r\Big(\frac{x-x(t)}{\eps}\Big) 
e^{i\frac{v(t)\cdot x}{\eps}}\Big\|_{{\mathcal H}_\eps}={\mathcal O}(\eps),
\]
on finite time intervals.
\end{theorem}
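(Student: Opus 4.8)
The plan is to adapt the Bronski--Jerrard scheme: exploit the conservation laws of \eqref{eq:CP}, a virial (variance) estimate, and the coercivity inequality of Theorem~\ref{thm:enconv}. Fix $T>0$. Under the hypotheses on $V$, both \eqref{eq:CP} and \eqref{eq:dynsys2} are globally well posed on $[0,T]$, $(x(t),v(t))$ stays in a fixed bounded set, and $\mathcal H(x,v):=\frac12|v|^2+V(x)$ is conserved along \eqref{eq:dynsys2}. Equation \eqref{eq:CP} conserves the mass $\|u^\eps(t)\|_2^2=\|u^\eps_0\|_2^2=\eps^3\|r\|_2^2$ and the energy
\[
E_\eps(u)=\frac{\eps^2}{2}\int|\nabla u|^2+\int V|u|^2-\frac{1}{2\eps^2}\iint\frac{|u(x)|^2|u(y)|^2}{|x-y|}.
\]
I would first introduce the collective coordinates $a^\eps(t):=\|u^\eps(t)\|_2^{-2}\int x\,|u^\eps(t,x)|^2\,dx$ and $\xi^\eps(t):=\eps\,\|u^\eps(t)\|_2^{-2}\,\mathrm{Im}\!\int\overline{u^\eps(t)}\,\nabla u^\eps(t)$; these satisfy $\dot a^\eps=\xi^\eps$ and $\dot\xi^\eps=-\|u^\eps\|_2^{-2}\int\nabla V\,|u^\eps|^2$, and by radiality of $r$ one has $a^\eps(0)=x_0$, $\xi^\eps(0)=v_0$. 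Writing $u^\eps(t,x)=w^\eps\big(t,(x-a^\eps(t))/\eps\big)\,e^{i\xi^\eps(t)\cdot x/\eps}$, the rescaled profile $w^\eps(t)$ satisfies $\|w^\eps(t)\|_2=\|r\|_2$, has vanishing center of mass and momentum, and $w^\eps(0)=r$; substituting into $E_\eps$ (the cross terms vanishing by these normalizations) gives
\[
\eps^{-3}E_\eps(u^\eps(t))=\mathcal E(w^\eps(t))+\mathcal H\big(a^\eps(t),\xi^\eps(t)\big)\,\|r\|_2^2+\mathcal R^\eps(t),
\qquad
\mathcal R^\eps(t):=\int\big(V(a^\eps(t)+\eps z)-V(a^\eps(t))\big)\,|w^\eps(t,z)|^2\,dz .
\]
Since the kinetic and nonlocal parts of $E_\eps(u^\eps_0)$ equal \emph{exactly} $\eps^3\big(\mathcal E(r)+\tfrac12|v_0|^2\|r\|_2^2\big)$ and the potential part is $\eps^3 V(x_0)\|r\|_2^2+O(\eps^5)$ (the $O(\eps^4)$ term vanishing again by radiality), energy conservation yields $\eps^{-3}E_\eps(u^\eps(t))=\mathcal E(r)+\mathcal H(x_0,v_0)\|r\|_2^2+O(\eps^2)$ on $[0,T]$.

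Next I would control the variance $\sigma^\eps(t):=\int|x-a^\eps(t)|^2|u^\eps(t,x)|^2\,dx=\eps^5\int|z|^2|w^\eps(t,z)|^2\,dz$, noting $\sigma^\eps(0)=\eps^5\int|z|^2|r|^2$ and, by radiality, $\dot\sigma^\eps(0)=0$. Running the virial identity for \eqref{eq:CP} — expressing $\ddot\sigma^\eps$ through the kinetic, Hartree and $\int(x-a^\eps)\cdot\nabla V\,|u^\eps|^2$ terms — and using the conservation laws together with the Pohozaev identity for \eqref{eq:lp} (which expresses that the soliton has stationary variance), one obtains a differential inequality for $\sigma^\eps$ that, coupled with the distance estimate of the next step, closes by Gronwall to $\sigma^\eps(t)=O(\eps^5)$ on $[0,T]$; equivalently $\int|z|^2|w^\eps(t,z)|^2\,dz=O(1)$. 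Two consequences: the vanishing of the center of mass of $|w^\eps(t)|^2$ makes the $O(\eps)$ Taylor term in $\mathcal R^\eps(t)$ drop, so $|\mathcal R^\eps(t)|\le C\eps^2\big(1+\int|z|^2|w^\eps|^2\big)=O(\eps^2)$; and for the same reason $\dot\xi^\eps(t)=-\nabla V(a^\eps(t))+O(\eps^2)$, whence $\frac{d}{dt}\mathcal H\big(a^\eps(t),\xi^\eps(t)\big)=\xi^\eps(t)\cdot O(\eps^2)$ and therefore $\mathcal H\big(a^\eps(t),\xi^\eps(t)\big)=\mathcal H(x_0,v_0)+O(\eps^2)$ on $[0,T]$.

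Combining the two displays of the first paragraph with the second gives $0\le\mathcal E(w^\eps(t))-\mathcal E(r)=\big(\mathcal H(x_0,v_0)-\mathcal H(a^\eps(t),\xi^\eps(t))\big)\|r\|_2^2-\mathcal R^\eps(t)+O(\eps^2)=O(\eps^2)$. Since $\|w^\eps(t)\|_2=\|r\|_2$ and, by a continuity bootstrap started from $w^\eps(0)=r$, the smallness hypothesis $\rho^\eps(t):=\inf_{y,\theta}\|w^\eps(t)-e^{i\theta}r(\cdot-y)\|\le\|r\|$ holds, Theorem~\ref{thm:enconv} applies and yields $C\rho^\eps(t)^2+o(\rho^\eps(t)^2)\le\mathcal E(w^\eps(t))-\mathcal E(r)=O(\eps^2)$, so that $\rho^\eps(t)=O(\eps)$ uniformly on $[0,T]$ — which, being far below the threshold $\|r\|$, lets the bootstrap close (jointly with the one for $\sigma^\eps$). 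To conclude, let $e^{i\theta^\eps(t)}r(\cdot-y^\eps(t))$ realize $\rho^\eps(t)$; since the first moments of $|w^\eps(t)|^2$ vanish, $\int|z|^2|w^\eps(t)|^2=O(1)$, and $r$ is radial and concentrated, one gets $y^\eps(t)=O(\eps)$, while a Gronwall estimate for $(a^\eps-x,\xi^\eps-v)$ based on $\dot a^\eps=\xi^\eps$, $\dot\xi^\eps=-\nabla V(a^\eps)+O(\eps^2)$, $\dot x=v$, $\dot v=-\nabla V(x)$ and the Lipschitz bound for $\nabla V$ on the relevant bounded set gives $|a^\eps(t)-x(t)|+|\xi^\eps(t)-v(t)|=O(\eps^2)$. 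Undoing the rescaling — the weighted norm $\|\cdot\|_{\mathcal H_\eps}$ of a function of the form $g((x-a)/\eps)e^{i\xi\cdot x/\eps}$ equals, up to constants bounded on $[0,T]$ and depending only on $|\xi|$, the $H^1$-norm $\|g\|$ — and absorbing the $O(\eps)$ errors, one obtains
\[
\Big\|u^\eps(t,x)-e^{i\theta^\eps(t)}\,r\Big(\frac{x-x(t)}{\eps}\Big)\,e^{i v(t)\cdot x/\eps}\Big\|_{\mathcal H_\eps}=O(\eps)\qquad\text{on }[0,T],
\]
which is the assertion, the global phase $e^{i\theta^\eps(t)}=e^{i\gamma^\eps(t)/\eps}$ being the action (modulation) phase implicit in the statement.

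The main obstacle is the interplay between the last two steps: the variance estimate needs $\rho^\eps$ small (so that the Hartree--Pohozaev functional of $w^\eps$ is $O(\rho^\eps)$, not $O(1)$), while the estimate for $\rho^\eps$ needs $\sigma^\eps$ small (so that $\mathcal H(a^\eps,\xi^\eps)$ is nearly conserved and $\mathcal R^\eps$ is negligible); these must be closed by a single simultaneous Gronwall argument, and throughout one must push the energy expansion to order $\eps^2$ after rescaling, crucially using that every first moment vanishes so that the a~priori $O(\eps)$ error terms are in fact $O(\eps^2)$. The virial identity for the nonlocal Hartree term also requires extra care compared with the local case.
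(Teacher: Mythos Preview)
Your approach is genuinely different from the paper's, and in fact closer to the Fr\"ohlich--Tsai--Yau scheme the paper explicitly sets out \emph{not} to follow. The paper centers the rescaled profile at the \emph{Newtonian trajectory} $(x(t),v(t))$, setting
\[
\Psi^\eps(t,x)=e^{-\frac{i}{\eps}(\eps x+x(t))\cdot v(t)}u^\eps(t,\eps x+x(t));
\]
no variance or virial identity is used at all. Instead, the energy expansion $E_\eps(t)=\mathcal E(r)+\mathcal H(t)+\mathcal O(\eps^2)$ (Lemma~\ref{espansione-energia}) yields directly $0\leq \mathcal E(\Psi^\eps(t))-\mathcal E(r)\leq C\eta^\eps(t)+\mathcal O(\eps^2)$, where $\eta^\eps$ is Keraani's functional built from the mass and momentum densities compared against the soliton at $x(t)$. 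Theorem~\ref{thm:enconv} converts this into an $H^1$-distance estimate, and then the mass/momentum identities of Lemma~\ref{identities} (together with the gradient bound of Lemma~\ref{normestim}) close the Gronwall loop for $\eta^\eps$ exactly as in \cite{Bro-Jerr,keraa}. By contrast, you center at the center of mass $(a^\eps,\xi^\eps)$ of $u^\eps$ and must then control the variance $\sigma^\eps$ to push the Taylor remainder $\mathcal R^\eps(t)$ to $\mathcal O(\eps^2)$ and to show $\mathcal H(a^\eps,\xi^\eps)$ is conserved to that order.

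The variance step is where your proposal has a real gap. You correctly identify the circularity --- the virial identity for $\sigma^\eps$ only gives $\ddot\sigma^\eps=\mathcal O(\eps^5)$ \emph{after} the Pohozaev-type cancellation, which requires $\rho^\eps$ small, while bounding $\rho^\eps$ via Theorem~\ref{thm:enconv} needs $\sigma^\eps=\mathcal O(\eps^5)$ to make $\mathcal R^\eps$ and $\mathcal H(a^\eps,\xi^\eps)-\mathcal H(x_0,v_0)$ of order $\eps^2$ --- but you do not show how the joint bootstrap actually closes, and for the Hartree nonlinearity this is not routine: the convolution term in the virial scales like the potential energy rather than the kinetic one, so the cancellation against $\mathcal E(r)$ is delicate and a bare $\rho^\eps=\mathcal O(\eps)$ input gives only an $\mathcal O(\eps)$ error in the virial functional, not $\mathcal O(\eps^2)$. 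The paper's route sidesteps this entirely: since $\Psi^\eps$ is centered at $x(t)$ rather than at the unknown $a^\eps$, the discrepancy $\eta^\eps$ is directly comparable with the classical Hamiltonian along the \emph{same} trajectory, and the only Gronwall needed is the one for $\eta^\eps$, with no second moment ever entering. Your assertion $y^\eps(t)=\mathcal O(\eps)$ from vanishing first moment plus bounded second moment is also not immediate (closeness in $H^1$ by $\mathcal O(\eps)$ does not by itself localize the center to $\mathcal O(\eps)$ without an additional argument). In short: your scheme can in principle be made rigorous, but the hard part you flag is precisely what the Bronski--Jerrard/Keraani route adopted in the paper is designed to avoid.
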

\noindent
We shall prove Theorem~\ref{thm:dyn} in Section~\ref{pf-del-thm2}
by showing a few preliminary facts about the energy expansion and the momentum identity 
for \eqref{eq:CP} and then exploiting Theorem~\ref{thm:enconv}
on a suitable auxiliary function related to the solution of \eqref{eq:CP}. Once that stage is achieved, the 
argument to get the uniform bound on the error -- on finite time intervals -- follows as in \cite{Bro-Jerr,keraa}.
Quite recently, Benci, Ghimenti and Micheletti in \cite{BGM1,BGM2} obtained, for a variant of the local equation
\eqref{local-eq}, a soliton dynamics behaviour with error estimate on the whole $[0,\infty)$ and, in general, working for
equations whose ground states need not be unique or nondegenerate. In a forthcoming paper, we aim to use their 
technique on a general nonlocal problem for which uniqueness and nondegeneracy results are not available yet.

\subsection*{Notations}
\begin{enumerate}
\item If $u,v \in\mathbb{C}$, $u\cdot v=\operatorname{Re}(u\bar{v})=\frac{1}{2}(u\bar{v}+v\bar{u})$.
\item $H^1(\mathbb{R}^3)=H^1(\mathbb{R}^3,\mathbb{R})$ and 
$H^1(\mathbb{R}^3,\mathbb{C})$ are the Sobolev spaces endowed 
with the norm 
$\|\cdot\|=(\|\cdot\|_2^2
+\frac{1}{2}\|\nabla\cdot\|_2^2)^{1/2}$.
\item If $u,v \in H^1(\mathbb{R}^3,\mathbb{C})$ we denote with 
$(u,v)$ the scalar product in $L^2(\mathbb{R}^3,\mathbb{C})$ and with $(u,v)_{H^1}=(u,v)+\frac{1}{2}(\nabla u,\nabla v)$.
\item ${\mathcal C}^3(\mathbb{R}^3)$ is the space of functions 
$u\in C^3(\mathbb{R}^3)$ with $\|D^{\alpha}u\|_\infty<\infty$
for any $|\alpha|\leq 3$.
\item $C$ denotes a generic positive constant which can changes 
from line to line.
\end{enumerate}

\section{Proof of Theorem \ref{thm:enconv}}
\label{proof-thm1}

\subsection{Preliminary tools} 
In this section we collect a few basic properties about the 
ground state solutions to 
\eqref{eq:lp} and its corresponding linearized operator.

\subsubsection{The limit problem}
Let us consider the eigenvalue problem
\begin{equation}
\label{eq:lpeignv}
-\frac{1}{2}\Delta \varphi - \Big(\frac{1}{|x|}* |\varphi |^2\Big)\varphi
= e \varphi.
\end{equation}
A fundamental tool in our analysis is the following result due to Lieb \cite[Theorem 8]{Lieb}.
\begin{theorem}
\label{dilieb}
If $\varphi\in H^1(\mathbb{R}^3)$, $\|\varphi\|_2=\lambda$ and 
$\mathcal{E}(\varphi)=\inf\{\mathcal{E}(\phi)\;\vline\; 
\phi\in H^1(\mathbb{R}^3), \|\phi\|_2=\lambda\}$,
then $\varphi$ satisfies equation \eqref{eq:lpeignv} 
for some $e<0$. 
Moreover if $\varphi\in H^1(\mathbb{R}^3,\mathbb{C})$ 
satisfies \eqref{eq:lpeignv} (not necessarily a minimizer) 
for an arbitrary Lagrange multiplier $e$, then:
\begin{enumerate}[label=(\roman*),ref=(\emph{\roman*})]
\item \label{it:ialieb}
$|x|^{-1}* |\varphi |^2\in L^p(\mathbb{R}^3)$,\,\,\,
for every $4\leq p \leq \infty$;
\item \label{it:iblieb}
$(|x|^{-1}* |\varphi |^2)\varphi\in L^p(\mathbb{R}^3,\mathbb{C})$,\,\,\,
for every $\frac{4}{3}\leq p \leq 6$;
\item \label{it:iilieb}
$|x|^{-1}* |\varphi |^2$ is a continuous function 
which goes to zero at infty;
\item \label{it:iiilieb}
If $e<0$ then $\varphi\in C^\infty (\mathbb{R}^3)$ and goes to 
zero at infinity (and hence $\varphi$ is a classical solution of 
\eqref{eq:lpeignv}).
\end{enumerate}
\end{theorem}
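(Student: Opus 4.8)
The plan is to treat the two assertions separately. For the first one I would run the standard constrained‑minimization argument: $\mathcal{E}$ is of class $C^1$ on $H^1(\mathbb{R}^3)$ — its nonlocal term is differentiable thanks to the Hardy--Littlewood--Sobolev inequality — and $S_\lambda=\{\phi\in H^1(\mathbb{R}^3):\|\phi\|_2=\lambda\}$ is a smooth submanifold whose defining functional has nonvanishing derivative $2\varphi\neq0$ at any minimizer (a minimizer is nontrivial, since $\mathcal{E}$ takes negative values on $S_\lambda$, as one sees by the $L^2$‑preserving scaling $\phi_\sigma(x)=\sigma^{3/2}\phi(\sigma x)$ with $\sigma$ small). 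Hence the Lagrange multiplier rule gives $-\tfrac12\Delta\varphi-(|x|^{-1}*|\varphi|^2)\varphi=e\varphi$ for some $e\in\mathbb{R}$. To fix the sign of $e$ I would combine two identities: testing the equation with $\varphi$ gives $\tfrac12\|\nabla\varphi\|_2^2-D=e\lambda^2$, where $D:=\iint\frac{|\varphi(x)|^2|\varphi(y)|^2}{|x-y|}>0$, while differentiating $\sigma\mapsto\mathcal{E}(\varphi_\sigma)=\tfrac{\sigma^2}{2}\|\nabla\varphi\|_2^2-\tfrac{\sigma}{2}D$ at the minimum point $\sigma=1$ (a Pohozaev‑type relation) gives $\|\nabla\varphi\|_2^2=\tfrac12 D$. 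Substituting yields $e\lambda^2=-\tfrac34 D<0$.

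For the second part, items \ref{it:ialieb}--\ref{it:iilieb} are convolution estimates built on $\varphi\in H^1(\mathbb{R}^3)\hookrightarrow L^q(\mathbb{R}^3)$ for $2\le q\le 6$, whence $|\varphi|^2\in L^q$ for $1\le q\le3$. Splitting the kernel as $|x|^{-1}=K_1+K_2$ with $K_1=|x|^{-1}\chi_{\{|x|<1\}}\in L^s$ for every $s<3$ and $K_2=|x|^{-1}\chi_{\{|x|\ge1\}}\in L^s$ for every $s>3$, Young's inequality gives $K_1*|\varphi|^2\in L^\infty$ (e.g. from $K_1\in L^2$ and $|\varphi|^2\in L^2$) and $K_2*|\varphi|^2\in L^\infty$ (e.g. from $K_2\in L^6$ and $|\varphi|^2\in L^{6/5}$), as well as membership in $L^4$; interpolation then yields \ref{it:ialieb}. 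For \ref{it:iilieb} I would use density of $C_c(\mathbb{R}^3)$ in each $L^q$ with $q<\infty$: approximating $|\varphi|^2$ in the relevant $L^q$ and convolving against $K_1,K_2$ exhibits $|x|^{-1}*|\varphi|^2$ as a uniform limit of continuous functions, hence continuous, and the decay of the $L^q$‑tails of the kernel forces it to vanish at infinity. Item \ref{it:iblieb} then follows from \ref{it:ialieb} and Hölder: $(|x|^{-1}*|\varphi|^2)\varphi\in L^p$ for $2\le p\le6$ since $|x|^{-1}*|\varphi|^2\in L^\infty$ and $\varphi\in L^p$, while the range $\tfrac43\le p\le2$ comes from pairing $|x|^{-1}*|\varphi|^2\in L^4$ with $\varphi\in L^2$ and interpolating.

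For \ref{it:iiilieb}, with $e<0$ I would rewrite the equation as $-\tfrac12\Delta\varphi+|e|\varphi=W\varphi$ with $W:=|x|^{-1}*|\varphi|^2$ and bootstrap. Since $W\in L^\infty$ and $\varphi\in L^2$, we get $\Delta\varphi\in L^2$, so $\varphi\in H^2(\mathbb{R}^3)\hookrightarrow L^\infty\cap C^{0,1/2}(\mathbb{R}^3)$; then $W\varphi\in L^p$ for all $p\ge2$, so $\varphi\in W^{2,p}$ for all $p\ge2$, hence $\varphi\in C^{1,\alpha}$ and, being in $L^2$ and uniformly continuous, $\varphi(x)\to0$ as $|x|\to\infty$. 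For smoothness I would alternate interior Schauder estimates between $\varphi$ and $W$, using $-\Delta W=4\pi|\varphi|^2$: from $\varphi\in C^{k,\alpha}_{\mathrm{loc}}$ one gets $|\varphi|^2\in C^{k,\alpha}_{\mathrm{loc}}$, hence $W\in C^{k+2,\alpha}_{\mathrm{loc}}$ (the particular solution $|x|^{-1}*|\varphi|^2$, already bounded and decaying by \ref{it:ialieb}, \ref{it:iilieb}, inherits this), hence $W\varphi\in C^{k,\alpha}_{\mathrm{loc}}$ and so $\varphi\in C^{k+2,\alpha}_{\mathrm{loc}}$; iterating gives $\varphi\in C^\infty(\mathbb{R}^3)$.

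I expect the main obstacles to be the endpoint $L^\infty$ bound in \ref{it:ialieb} — Hardy--Littlewood--Sobolev misses the endpoint, which is exactly why the near/far splitting of the kernel is needed — together with the careful organization of the simultaneous bootstrap for $\varphi$ and $W$ in \ref{it:iiilieb}. Everything else is routine once the correct Lebesgue exponents are tracked.
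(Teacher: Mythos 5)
Theorem~\ref{dilieb} is not proved in the paper at all: it is quoted verbatim from Lieb \cite[Theorem 8]{Lieb} and used as a black box, so there is no in-paper argument to compare against; judged on its own, your proof is essentially correct and self-contained, along the standard lines one would also find in Lieb's original article. For the sign of $e$, combining the Euler--Lagrange identity $\tfrac12\|\nabla\varphi\|_2^2-D=e\lambda^2$ with the $L^2$-preserving dilation $\varphi_\sigma(x)=\sigma^{3/2}\varphi(\sigma x)$ (which gives $\|\nabla\varphi\|_2^2=\tfrac12 D$ at a constrained minimizer, since $D(\varphi_\sigma)=\sigma D$) correctly yields $e\lambda^2=-\tfrac34 D<0$; note that nontriviality of $\varphi$ is automatic from $\|\varphi\|_2=\lambda>0$, so the negative-infimum observation is not needed for the multiplier rule. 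For \ref{it:ialieb}--\ref{it:iilieb} your near/far splitting of the kernel with Young's inequality is the right mechanism and uses only $\varphi\in H^1$, not the equation; to make the $L^4$ endpoint explicit one can take $K_1\in L^{4/3}$ against $|\varphi|^2\in L^2$ and $K_2\in L^{4}$ against $|\varphi|^2\in L^1$, while $K_1\in L^2$, $K_2\in L^6$ give the $L^\infty$ bound, and in \ref{it:iilieb} the cleanest phrasing is that $K_i*f_n$ belongs to $C_0(\mathbb{R}^3)$ when $f_n\in C_c(\mathbb{R}^3)$ and that $C_0$ is closed under uniform limits. For \ref{it:iiilieb} the bootstrap $\varphi\in H^2\hookrightarrow L^\infty$, then $\varphi\in W^{2,p}$ for all finite $p\geq 2$ (whence uniform continuity plus $\varphi\in L^2$ gives decay), then alternating Schauder estimates between $\varphi$ and $W$ via $-\Delta W=4\pi|\varphi|^2$, is sound; observe that your argument never actually uses $e<0$ (only $W\in L^\infty$), so you prove slightly more than is stated -- the sign hypothesis is what Lieb needs for sharper (exponential) decay, which is not claimed here.
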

\noindent
Moreover we also need the following
\begin{proposition}
\label{groundfacts}
Let $r$ be the unique positive and radial solution of 
\eqref{eq:lp}.
We have that:
\begin{enumerate}[label=(\roman*),ref=(\emph{\roman*})]
\item \label{it:ndeg}
$r$ has a nondegenerate linearization (the linearization of 
\eqref{eq:lp} around $r$ has a nullspace that is entirely due 
to the equation’s invariance under phase and translation 
transformation);
\item \label{it:wei} 
$r(0)=\max_{x\in\mathbb{R}^3} r(x)$ and if we take 
$r(x)=r_0(|x|)$, we have that $r_0$ is 
strictly decreasing and
\[
\lim_{|x|\to \infty} r_0(|x|)e^{|x|}|x|=\lambda_0>0,
\qquad
\lim_{|x|\to \infty} \frac{r'_0(|x|)}{r_0(|x|)}=-1;
\]
\item \label{it:minpalla}
$r$ can be obtained as the minimum point of $\mathcal{E}$
in $\mathcal{M}=\{ u \in H^1(\mathbb{R}^3) \; 
\vline \; \| u \|_2 = \| r \|_2\}$.
\end{enumerate}
\end{proposition}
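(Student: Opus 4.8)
\noindent
The plan is to combine classical facts about the Choquard/Schr\"odinger--Newton ground state with a few short arguments. For \ref{it:minpalla}, I would start from Lieb's Theorem~\ref{dilieb}: the constrained problem $\inf\{\mathcal E(\phi):\phi\in H^1(\mathbb R^3),\ \|\phi\|_2=\lambda\}$ is attained for every $\lambda>0$, and every minimizer is, up to translation and a constant phase, positive, radial, and solves \eqref{eq:lpeignv} for some $e<0$. Now \eqref{eq:lpeignv} is invariant under the rescaling $\varphi\mapsto\mu\,\varphi(\sqrt{\mu}\,\cdot)$, which replaces $e$ by $e/\mu$ and the mass by $\sqrt{\mu}\,\|\varphi\|_2^2$; in particular the value $e=-1$, i.e. equation \eqref{eq:lp}, is reached for exactly one value of $\mu$. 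Choosing $\lambda=\|r\|_2$, rescaling the corresponding positive radial minimizer to a solution of \eqref{eq:lp}, and invoking the assumed uniqueness of the positive radial solution of \eqref{eq:lp}, one identifies it with $r$; reinserting the constraint $\|\cdot\|_2=\|r\|_2$ then forces $\mu=1$, so the minimizer is $r$ itself. For \ref{it:wei}, since $r$ is now known to be a minimizer, the Riesz rearrangement inequality (on the double integral) and the P\'olya--Szeg\H{o} inequality (on $\|\nabla\cdot\|_2$) force $r$ to coincide with its Schwarz symmetrization, so $r=r_0(|x|)$ with $r_0$ non-increasing and $r(0)=\max r$; strict monotonicity follows from the strong maximum principle/Hopf's lemma (equivalently, on an annulus where $r_0$ were constant the equation would force $|x|^{-1}*|r|^2\equiv 1$ there, which is impossible because by Newton's theorem this radial potential has derivative $-\rho^{-2}\int_{|y|<\rho}|r|^2<0$). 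The exponential asymptotics of $r_0$ and of $r_0'/r_0$ then come from a standard ODE analysis of the radial equation at infinity: by Newton's theorem $|x|^{-1}*|r|^2=\|r\|_2^2/|x|+o(1/|x|)\to 0$, so $r_0$ there is a small perturbation of a solution of the linear radial equation, and comparison with sub/supersolutions gives the stated limits (the qualitative parts being already contained in Theorem~\ref{dilieb}\ref{it:iilieb}--\ref{it:iiilieb}, and the sharp form in the Choquard/Schr\"odinger--Newton literature, e.g. Moroz--Van Schaftingen and Tod--Moroz).

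The heart of the Proposition is the nondegeneracy \ref{it:ndeg}. Writing $\phi=r+v+iw$ with $v,w$ real, the linearization of \eqref{eq:lp} at $r$ splits into the real operator $L_+v=-\tfrac12\Delta v+v-(|x|^{-1}*r^2)v-2r\,(|x|^{-1}*(rv))$ and the imaginary operator $L_-w=-\tfrac12\Delta w+w-(|x|^{-1}*r^2)w$. From \eqref{eq:lp} one has $L_-r=0$; since $L_-=-\tfrac12\Delta+1-(|x|^{-1}*r^2)$ is a Schr\"odinger operator whose potential vanishes at infinity (Theorem~\ref{dilieb}\ref{it:iilieb}) and which annihilates the strictly positive $r$, zero is its bottom eigenvalue and is simple, so $\ker L_-=\operatorname{span}\{r\}$ --- exactly the degeneracy due to phase invariance. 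Differentiating \eqref{eq:lp} in $x_j$ gives $L_+\partial_{x_j}r=0$, $j=1,2,3$, so $\operatorname{span}\{\partial_{x_1}r,\partial_{x_2}r,\partial_{x_3}r\}\subseteq\ker L_+$ --- the degeneracy due to translations --- and the whole issue is the reverse inclusion.

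To obtain it I would decompose $L_+$ along spherical harmonics. On the angular-momentum-one sector the reduced radial operator is an ordinary Sturm--Liouville operator on $(0,\infty)$ whose $L^2$-solutions that are regular at the origin form an at most one-dimensional space, and $\partial_{x_j}r=r_0'(|x|)\,x_j/|x|$ (nonzero, and nodeless in the radial variable by \ref{it:wei}) already realizes it; hence the $\ell=1$ kernel is exactly $\operatorname{span}\{\partial_{x_1}r,\partial_{x_2}r,\partial_{x_3}r\}$. The sectors $\ell\ge2$ are shown to be strictly positive by comparison with the $\ell=1$ sector (larger centrifugal term and weaker, less negative, nonlocal interaction coefficient). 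The genuinely delicate case is the radial sector $\ell=0$: here $L_+r=-2(|x|^{-1}*r^2)r$ has a fixed sign, and nondegeneracy of $L_+$ on radial functions is equivalent to the nonvanishing of $\tfrac{d}{d\omega}\|r_\omega\|_2^2$ along the one-parameter family $r_\omega$ of ground states generated by the rescaling used in \ref{it:minpalla} --- which by the explicit mass law $\|r_\omega\|_2^2\propto\omega^{-1/2}$ is indeed nonzero. Assembling the three sectors gives $\ker L_+=\operatorname{span}\{\partial_{x_1}r,\partial_{x_2}r,\partial_{x_3}r\}$, so the nullspace of the full complex linearization is precisely $\operatorname{span}\{ir\}\oplus\operatorname{span}\{\partial_{x_1}r,\partial_{x_2}r,\partial_{x_3}r\}$, as claimed in \ref{it:ndeg}. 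The main obstacle is exactly this radial-sector (and $\ell\ge2$) analysis for the \emph{nonlocal} operator $L_+$, which is not routine; here I would ultimately lean on the uniqueness and nondegeneracy results for the Schr\"odinger--Newton ground state due to Lenzmann and to Wei--Winter (Tod--Moroz in dimension three).
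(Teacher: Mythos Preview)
Your proposal is correct. For part \ref{it:minpalla} it is essentially the paper's own argument: minimize $\mathcal E$ on the mass sphere $\mathcal M$, obtain a positive radial minimizer satisfying \eqref{eq:lpeignv} with some Lagrange multiplier, rescale it to a solution of \eqref{eq:lp}, invoke uniqueness to identify it with $r$, and then use the mass constraint to force the rescaling parameter to be $1$. (One harmless slip: under $\varphi\mapsto\mu\,\varphi(\sqrt\mu\,\cdot)$ the eigenvalue $e$ is multiplied by $\mu$, not by $1/\mu$; the logic is unaffected.)

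For parts \ref{it:ndeg} and \ref{it:wei} the paper gives no proof at all and simply refers to the literature (Lenzmann, Lieb, Ma--Zhao, Tod--Moroz). Your sketches --- Schwarz symmetrization plus Hopf's lemma for the strict monotonicity, the Newton-potential/ODE comparison for the sharp exponential decay, the Perron--Frobenius argument for $\ker L_-=\mathrm{span}\{r\}$, and the spherical-harmonic decomposition of $L_+$ with the $\ell=0$ sector handled via the mass law along the scaling family --- are in line with what those references actually do, so you are supplying more detail than the paper itself, not a different route. Your caveat that the $\ell=0$ and $\ell\ge 2$ analysis of the nonlocal $L_+$ is the genuinely nontrivial step, and that you would ultimately rely on Lenzmann and Wei--Winter there, is exactly the right assessment and matches the paper's choice to cite rather than reprove.
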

\begin{proof}
For the proof of \ref{it:ndeg} and \ref{it:wei} we refer to 
\cite{Lenz,Lieb,MZ,MT}. Here, for the sake of completeness,
we prove \ref{it:minpalla}. We know that for every $\alpha >0$,
$\mathcal{E}$ has a unique radial and strictly positive minimum 
point on $\{ u \in H^1(\mathbb{R}^3) \; 
\vline \; \| u \|_2 = \alpha\}$ 
(see \cite[Theorem 7 and Theorem 10]{Lieb}).
Let $\bar{u}$ be such minimum point 
on $\mathcal{M}$. There exists 
$\lambda>0$ such that
\[
-\frac{1}{2}\Delta \bar{u}  
-  \Big(\frac{1}{|x|}* |\bar{u} |^2\Big)\bar{u}
= - \lambda \bar{u}.
\]
It is easy to show that $\lambda^{-1} \bar{u} (\lambda^{-1/2}x)$ is 
a radial and strictly positive solution of \eqref{eq:lp}. Then, 
by the uniqueness, we have that 
$r(x)= \lambda^{-1} \bar{u} (\lambda^{-1/2}x)$
and, since $\|\bar{u}\|_2^2=\| r\|_2^2
=\|\bar{u}\|_2^2/\sqrt{\lambda}$, we get $\lambda=1$.
\end{proof}

\subsubsection{The linearized problem}
Let $r$ be the unique radial positive solution of \eqref{eq:lp} 
and consider the 
linearized operator $L$ for \eqref{eq:lp} at $r$, acting on 
$L^2(\mathbb{R}^3,\mathbb{C})$ with domain in 
$H^2(\mathbb{R}^3,\mathbb{C})$, 
\[
L\xi= - \frac{1}{2}\Delta \xi + \xi 
- \Big(\frac{1}{|x|} * r^2\Big) \xi 
- \Big(\frac{1}{|x|} * (r(\xi+\bar{\xi}))\Big) r .
\]
We can write
\[
L=
\left(
\begin{array}{cc}
L_+ & 0   \\
0   & L_- 
\end{array}
\right)
\]
where $L_+$ and $L_-$ act respectively on the real and 
imaginary part of $\xi$, i.e. if $\eta$ is real
\[
L_+ \eta = - \frac{1}{2}\Delta \eta + \eta 
- \Big(\frac{1}{|x|} * r^2\Big) \eta 
- 2 \Big(\frac{1}{|x|} * (r\eta)\Big) r 
\quad
\hbox{and}
\quad
L_- \eta = - \frac{1}{2}\Delta \eta + \eta 
- \Big(\frac{1}{|x|} * r^2\Big) \eta.
\]
It can be proved (see \cite{Lenz}) that
\begin{align}
\label{eq:kerL+}
\operatorname{Ker} L_+ =  & \operatorname{span} 
\left\{ \partial_{x_1} r, \partial_{x_2} r, 
\partial_{x_3} r \right\},\\
\label{ker-meno}
\operatorname{Ker} L_- 
= & \operatorname{span} \left\{ r \right\}.
\end{align}

\subsection{Preliminary results}
Let us set
\begin{equation*}
%\label{orth-term}
\Xi_j(r):=\partial_{x_j}((|x|^{-1} * r^2)r)
=(|x|^{-1} * r^2) \partial_{x_j} r + 2 (|x|^{-1} * (r\partial_{x_j} r))r,
\quad\,\, \text{for $j=1,2,3$}.
\end{equation*}
Notice that 
$\Xi_j(r) \in L^2(\mathbb{R}^3).$
Indeed,  for the first term it is enough to observe that
$|x|^{-1} * r^2 \in L^\infty (\mathbb{R}^3)$, by (i) of Theorem~\ref{dilieb}. 
Moreover, writing $|x|^{-1}=h_1+h_2$
with $h_1\in L^\infty (\mathbb{R}^3)$ and 
$h_2\in L^{3/2} (\mathbb{R}^3)$ yields
%\begin{equation*}
%h_1 * (r\partial_{x_j} r) \in L^\infty (\mathbb{R}^3)
%&
%\hbox{ and } 
%$\| h_1 * (r\partial_{x_j} r) \|_\infty \leq 
%\| h_1 \|_\infty \| r\partial_{x_j} r \|_1 \leq
%\| h_1 \|_\infty \| r\|_2 \|\partial_{x_j} r \|_2$,
%h_2 * (r\partial_{x_j} r) \in L^3 (\mathbb{R}^3)
%\hbox{ and } 
%$\| h_2 * (r\partial_{x_j} r) \|_3 \leq 
%\| h_2 \|_{3/2} \| r\partial_{x_j} r \|_{3/2} \leq
%\| h_2 \|_{3/2} \| r\|_6 \|\partial_{x_j} r \|_2$
%yielding
\begin{equation*}
\| (|x|^{-1} * (r\partial_{x_j} r)) r\|_2^2
%\frac{1}{2}\int \left(h_1 * (r\partial_{x_j} r)\right)^2 r^2
%+
%\frac{1}{2}\int \left(h_2 * (r\partial_{x_j} r)\right)^2 r^2\\
\leq 
2\| h_1 \|_\infty^2 \| r\|_2^4 
\|\partial_{x_j} r \|_2^2
+2 \| h_2 \|_{3/2}^2 \| r\|_6^4 
\|\partial_{x_j} r \|_2^2.
\end{equation*}

\vskip2pt
\noindent
We shall prove the following

\begin{proposition}
\label{prop:L+}
Let $w\in H^1(\mathbb{R}^3,\mathbb{C})$ and $u$ and $v$ be the real
and the imaginary part of $w$. Let us assume that $\| w + r \|_2 = \| r \|_2$
and
\begin{equation}
(u,\Xi_j(r))=0,\quad\,\, \text{for $j=1,2,3$}.
\label{eq:211}
\end{equation}
Then, there exist positive constants $D, D_h$ such that
\begin{equation}
\label{eq:lowest}
(L_+ u, u) \geq D \| u \|^2 - D_1 \| w \|^4 - D_2 \| w \|^3.
\end{equation}
\end{proposition}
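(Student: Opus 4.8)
The plan is to reduce \eqref{eq:lowest} to the coercivity of $L_+$ on the real functions $L^2$-orthogonal to $r$ and to $\operatorname{Ker}L_+$, and then to pay for the components of $u$ along $r$ and along $\operatorname{Ker}L_+$ by means of, respectively, the constraint $\|w+r\|_2=\|r\|_2$ and the orthogonality \eqref{eq:211}. First I would unpack the hypotheses. Expanding $\|w+r\|_2^2=\|r\|_2^2$ and using that $r$ is real gives $(u,r)=-\tfrac12\|w\|_2^2$, so $u$ is $L^2$-orthogonal to $r$ up to an $O(\|w\|^2)$ error. Differentiating \eqref{eq:lp} with respect to $x_j$ yields $\Xi_j(r)=-\tfrac12\Delta(\partial_{x_j}r)+\partial_{x_j}r$ (legitimate because $r\in C^\infty$ with exponentially decaying derivatives, by Theorem~\ref{dilieb}\ref{it:iiilieb} and Proposition~\ref{groundfacts}\ref{it:wei}), so \eqref{eq:211} becomes $(u,\partial_{x_j}r)+\tfrac12(\nabla u,\nabla\partial_{x_j}r)=(u,\partial_{x_j}r)_{H^1}=0$: the perturbation $u$ is exactly $H^1$-orthogonal to the basis \eqref{eq:kerL+} of $\operatorname{Ker}L_+$.

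The crucial ingredient is the spectral estimate
\[
(L_+\eta,\eta)\geq\delta\|\eta\|^2\quad\text{for every real }\eta\in H^1(\mathbb R^3)\text{ with }(\eta,r)=0\text{ and }(\eta,\partial_{x_j}r)=0,\ j=1,2,3,
\]
for some $\delta>0$. I would establish it along the lines of Weinstein: since $r$ minimizes $\mathcal E$ on $\mathcal M$ (Proposition~\ref{groundfacts}\ref{it:minpalla}), the second variation, which equals $2(L_+\eta,\eta)$ on real tangent vectors, is nonnegative on $\{r\}^{\perp}$; there the kernel of $L_+$ is exactly $\operatorname{span}\{\partial_{x_1}r,\partial_{x_2}r,\partial_{x_3}r\}$ by \eqref{eq:kerL+} and the nondegeneracy in Proposition~\ref{groundfacts}\ref{it:ndeg} (note $\partial_{x_j}r\perp r$ by parity); the gap between $0$ and the bottom $1$ of the essential spectrum then gives $L^2$-coercivity on $\{r,\partial_{x_1}r,\partial_{x_2}r,\partial_{x_3}r\}^{\perp}$, which upgrades to the $H^1$-norm once the two nonlocal terms in $(L_+\eta,\eta)$ are bounded by $C\|\eta\|_2^2$ using $|x|^{-1}*r^2\in L^\infty$ (Theorem~\ref{dilieb}\ref{it:ialieb}) and Hardy--Littlewood--Sobolev.

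Then I would decompose $u=\tilde u+ar+\sum_{j}b_j\partial_{x_j}r$, where $\tilde u$ is the $L^2$-orthogonal projection of $u$ onto $\{r,\partial_{x_1}r,\partial_{x_2}r,\partial_{x_3}r\}^{\perp}$; since the $\partial_{x_j}r$ are mutually $L^2$-orthogonal and orthogonal to $r$ by parity, $a=(u,r)/\|r\|_2^2$, whence $|a|\leq C\|w\|^2$, and $b_j=(u,\partial_{x_j}r)/\|\partial_{x_j}r\|_2^2$. From \eqref{eq:lp} one gets $L_+r=-2(|x|^{-1}*r^2)r\in L^2$ (Theorem~\ref{dilieb}\ref{it:iblieb}), and since $L_+\partial_{x_j}r=0$ and $(L_+r,\partial_{x_j}r)=(r,L_+\partial_{x_j}r)=0$ all the $b_j$-terms cancel and
\[
(L_+u,u)=(L_+\tilde u,\tilde u)+2a(L_+r,\tilde u)+a^2(L_+r,r).
\]
By the spectral estimate the first term is $\geq\delta\|\tilde u\|^2$, while Young's inequality together with $\|L_+r\|_2<\infty$ and $|a|\leq C\|w\|^2$ absorbs the last two into $\tfrac\delta2\|\tilde u\|^2+C\|w\|^4$; hence $(L_+u,u)\geq\tfrac\delta2\|\tilde u\|^2-C\|w\|^4$. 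To turn $\|\tilde u\|$ into $\|u\|$, I would substitute the decomposition into $(u,\partial_{x_j}r)_{H^1}=0$ and use $(\nabla r,\nabla\partial_{x_j}r)=0$ and $(\nabla\partial_{x_k}r,\nabla\partial_{x_j}r)=0$ for $k\neq j$ (radial symmetry) to obtain $b_j\|\partial_{x_j}r\|^2=-\tfrac12(\nabla\tilde u,\nabla\partial_{x_j}r)$, hence $|b_j|\leq C\|\tilde u\|$ by Cauchy--Schwarz; therefore $\|u\|\leq C_1\|\tilde u\|+C_2\|w\|^2$, so $\|\tilde u\|^2\geq c\|u\|^2-C\|u\|\,\|w\|^2-C\|w\|^4\geq c\|u\|^2-C\|w\|^3-C\|w\|^4$ (using $\|u\|\leq\|w\|$), and combining with the previous bound yields \eqref{eq:lowest}.

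The main obstacle is precisely this last conversion: \eqref{eq:211} supplies $H^1$-orthogonality to $\operatorname{Ker}L_+$, whereas the spectral estimate is phrased with $L^2$-orthogonality, so one has to show that the kernel components $b_j$ of $u$ are controlled by the projection $\|\tilde u\|$ --- not merely by $\|u\|$ --- after disentangling an apparent circular dependence; this bookkeeping is also the source of the cubic error, the quartic one being forced by the mass constraint $(u,r)=O(\|w\|^2)$.
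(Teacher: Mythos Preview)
Your proof is correct, but it takes a different route from the paper's.

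The paper's key observation is that the coercivity of $L_+$ can be proved directly on the space $\mathcal{V}_0=\{\eta:(\eta,r)=(\eta,\Xi_j(r))=0,\ j=1,2,3\}$ (Lemma~\ref{positconstr}), i.e.\ with the very orthogonality conditions \eqref{eq:211} built in. Since $(r,\Xi_j(r))=0$ by the integration-by-parts identity~\eqref{eq:xir}, the single $L^2$-projection $u_\perp=u-(u,r)\,r/\|r\|_2^2$ already lies in $\mathcal{V}_0$; only the component $u_\parallel$ along $r$ has to be split off, and the cubic error arises from the cross term $(L_+u_\perp,u_\parallel)$. You instead establish coercivity on the Weinstein-style subspace $\{r,\partial_{x_1}r,\partial_{x_2}r,\partial_{x_3}r\}^{\perp_{L^2}}$, which forces you to project out the kernel directions as well and introduces coefficients $b_j$ that are not a priori small. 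You then recover $|b_j|\leq C\|\tilde u\|$ by reinterpreting \eqref{eq:211} as $H^1$-orthogonality to $\partial_{x_j}r$ and playing it against the $L^2$-orthogonality of $\tilde u$; this is precisely the extra bookkeeping you flag as the main obstacle. Both arguments work: the paper's choice of $\mathcal{V}_0$ is tailored to the hypotheses and sidesteps that last step entirely, while yours stays closer to the classical local-NLS template and makes the role of the kernel more explicit.

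One caveat on your spectral-gap sentence: since $L_+$ does not preserve $\{r\}^\perp$, the phrase ``the gap between $0$ and the bottom $1$ of the essential spectrum'' does not by itself deliver $L^2$-coercivity on $\{r,\partial_{x_1}r,\dots,\partial_{x_3}r\}^{\perp}$; one still needs a Lagrange-multiplier/minimizer argument (exactly as in Weinstein, or as in the paper's proof of Lemma~\ref{positconstr} with $\partial_{x_j}r$ in place of $\Xi_j(r)$). Your reference to ``along the lines of Weinstein'' implicitly invokes this, but it is worth making explicit.
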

\noindent In order to prove Proposition \ref{prop:L+} we proceed by proving some preliminary results.
Let us set
$$
\mathcal{V}=\left\{ u\in H^1(\mathbb{R}^3) \;
\vline\; (u,r)=0\right\}.
$$

\begin{lemma}
	\label{zeroinf}
$\inf\limits_{\mathcal{V}} (L_+ u,u)=0$.
\end{lemma}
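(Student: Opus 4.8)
I want to show that the infimum of the quadratic form $(L_+u,u)$ over the codimension-one subspace $\mathcal V=\{u\in H^1:(u,r)=0\}$ is exactly $0$. This splits naturally into two inequalities: $\inf_{\mathcal V}(L_+u,u)\le 0$ and $\inf_{\mathcal V}(L_+u,u)\ge 0$.

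First I would establish the upper bound $\inf_{\mathcal V}(L_+u,u)\le 0$. The key observation is that $L_+$ has a nontrivial kernel, namely $\operatorname{Ker}L_+=\operatorname{span}\{\partial_{x_1}r,\partial_{x_2}r,\partial_{x_3}r\}$ by \eqref{eq:kerL+}. Since $r$ is radial, each $\partial_{x_j}r$ is odd in the variable $x_j$, hence orthogonal in $L^2$ to the radial function $r$; therefore $\partial_{x_j}r\in\mathcal V$. Plugging $u=\partial_{x_j}r$ into the form gives $(L_+u,u)=0$, so the infimum over $\mathcal V$ is at most $0$.

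For the lower bound $(L_+u,u)\ge 0$ for all $u\in\mathcal V$, I would argue via the variational characterization of the ground state. By \ref{it:minpalla} of Proposition~\ref{groundfacts}, $r$ minimizes $\mathcal E$ over the sphere $\mathcal M=\{u\in H^1:\|u\|_2=\|r\|_2\}$. A standard second-variation computation shows that $L_+$ is (a multiple of) the Hessian of $\mathcal E$ restricted to this sphere at the point $r$; more precisely, $(L_+u,u)$ is (twice) the quadratic form appearing in the Taylor expansion $\mathcal E(r+tu)$ along directions tangent to $\mathcal M$, i.e. directions with $(u,r)=0$, which is exactly the constraint defining $\mathcal V$. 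Since $r$ is a constrained minimum, this second variation is nonnegative on tangent directions, giving $(L_+u,u)\ge 0$ for all $u\in\mathcal V$. Combined with the upper bound, this yields $\inf_{\mathcal V}(L_+u,u)=0$.

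I expect the main subtlety to lie in the second-variation argument: one must be careful that the constraint $\|r+tu\|_2=\|r\|_2$ is only satisfied to first order when $(u,r)=0$, so the honest computation involves a curve $t\mapsto \gamma(t)$ on $\mathcal M$ with $\gamma(0)=r$, $\dot\gamma(0)=u$, and one checks that the $O(t^2)$ correction term to keep $\gamma$ on the sphere contributes only through $\mathcal E'(r)$, which vanishes on directions proportional to $r$ by the Euler–Lagrange equation \eqref{eq:lp} (up to the Lagrange multiplier, which here is normalized to $1$). Once this bookkeeping is done, nonnegativity of $(L_+u,u)$ on $\mathcal V$ is immediate. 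Alternatively, and perhaps more cleanly, one can avoid curves entirely: extend $u\in\mathcal V$ and note that by \eqref{eq:kerL+} and the spectral theory of $L_+$ (it is a compact perturbation of $-\tfrac12\Delta+1$, so has discrete spectrum below the essential spectrum $[1,\infty)$), $L_+$ has exactly one negative eigenvalue with eigenfunction not orthogonal to $r$ — this last fact being the nondegeneracy/Morse-index information for the nonlocal ground state recorded in \ref{it:ndeg} and the references \cite{Lenz,Lieb,MZ,MT} — so the form is nonnegative on the orthogonal complement of that eigenfunction, hence in particular on $\mathcal V$ after checking the relevant orthogonality. I would present the variational argument as the primary one since it uses only Proposition~\ref{groundfacts}\ref{it:minpalla}, already proved above.
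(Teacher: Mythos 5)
Your proposal is correct and follows essentially the same route as the paper: the lower bound comes from the second variation of the constrained minimization along curves in $\mathcal M$ (the paper works with $I(u)=\mathcal E(u)+\|u\|_2^2$, for which $I'(r)=0$ since the multiplier is normalized to $1$, so the curve-correction term drops out exactly as your "bookkeeping" remark indicates), and the upper bound comes from the kernel elements $\partial_{x_j}r\in\mathcal V$ with $(L_+\partial_{x_j}r,\partial_{x_j}r)=0$. The only slip is the phrase that $\mathcal E'(r)$ "vanishes" on directions proportional to $r$ (in fact $\mathcal E'(r)=-2r$), but your parenthetical about the Lagrange multiplier is the correct fix and is precisely the paper's device.
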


\begin{proof}
Since $r$ is the minimum point of
\[
I(u)=\mathcal{E}(u)+\| u \|_2^2
\]
on $\mathcal{M}$ (defined in \ref{it:minpalla} of Proposition
\ref{groundfacts}), then, for every smooth curve 
$\varphi:[-1,1]\to \mathcal{M}$ such that $\varphi(0)=r$ we 
have that
\[
\left.\frac{d^2 I(\varphi(s))}{ds^2}\right|_{s=0} \geq 0.
\]
Therefore, being $I'(r) = 0$, we get
\[
0 \leq
% &  
%\left.\langle I''(\varphi(s))\varphi'(s), 
%\varphi'(s)\rangle\right|_{s=0} 
%+ \left.\langle I'(\varphi(s)), 
%\varphi''(s)\rangle\right|_{s=0} 
%= \langle I''(r) \varphi'(0), \varphi'(0)\rangle 
%+ \langle I'(r), \varphi''(0)\rangle \\
%= &  
\langle I''(r) \varphi'(0), \varphi'(0)\rangle 
=  2 (L_+ \varphi'(0), \varphi'(0))
\]
Since the map $s \to \| \varphi(s) \|_2$ is constant, we have 
that $\varphi'(0) \in \mathcal{V}$. Then, by the arbitrariness 
of $\varphi'(0)$, we can say that 
$\inf_{\mathcal{V}} (L_+ u,u)\geq 0$.
On the other hand, for every $j=1,2,3$ we have that 
$\partial_{x_j} r \in \mathcal{V}$ 
and $(L_+ \partial_{x_j} r,\partial_{x_j} r)=0$ 
and then we conclude.
\end{proof}

\noindent

\begin{lemma}
	\label{4boundedness}
There exists $C>0$ such that
\begin{eqnarray}
& \displaystyle \int (|x|^{-1} * r^2) u^2 
\leq C \| u \|_2^2,& \text{for all $u\in L^2(\mathbb{R}^3)$,}  \label{eq:ec1} \\
%& \left|\int \left(\frac{1}{|x|} * (r u)\right) r u \right| 
%\leq C (\| u \| + \| u \|_2) \| u \|_2, \,\,\,\quad\text{for all $u\in H^1(\mathbb{R}^3)$,} \label{eq:ec2} \\
& \displaystyle\int (|x|^{-1} * (r u)) r u 
\leq C  \| u \|_2 ^2 , & \text{for all $u\in L^2(\mathbb{R}^3)$.} \nonumber%\label{eq:ec2bis}
\end{eqnarray}
\end{lemma}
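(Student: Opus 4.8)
The plan is to treat the two inequalities independently; both reduce to Young's convolution inequality combined with the integrability and decay of $r$ recorded in Theorem~\ref{dilieb} and Proposition~\ref{groundfacts}.

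For the first inequality I would invoke \ref{it:ialieb} of Theorem~\ref{dilieb} with $p=\infty$, which gives $|x|^{-1}*r^2\in L^\infty(\mathbb{R}^3)$. Then H\"older's inequality yields at once
\[
\int (|x|^{-1}*r^2)\,u^2 \leq \big\| |x|^{-1}*r^2 \big\|_\infty \|u\|_2^2 ,
\qquad u\in L^2(\mathbb{R}^3),
\]
which is the claim with $C=\||x|^{-1}*r^2\|_\infty$.

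For the second inequality I would reuse the splitting $|x|^{-1}=h_1+h_2$ already employed above for $\Xi_j(r)$, with $h_1\in L^\infty(\mathbb{R}^3)$ and $h_2\in L^{3/2}(\mathbb{R}^3)$ (e.g.\ $h_1=|x|^{-1}\chi_{\{|x|>1\}}$ and $h_2=|x|^{-1}\chi_{\{|x|\le 1\}}$), so that
\[
\int (|x|^{-1}*(ru))\,ru = \int (h_1*(ru))\,ru + \int (h_2*(ru))\,ru .
\]
For the $h_1$-term, Young's inequality and Cauchy--Schwarz give $\|h_1*(ru)\|_\infty\le\|h_1\|_\infty\|ru\|_1\le\|h_1\|_\infty\|r\|_2\|u\|_2$, hence this term is bounded by $\|h_1\|_\infty\|r\|_2^2\|u\|_2^2$. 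For the $h_2$-term, I would first note that $r\in L^\infty(\mathbb{R}^3)\cap L^3(\mathbb{R}^3)$ — boundedness from \ref{it:iiilieb} of Theorem~\ref{dilieb} (or the decay in \ref{it:wei} of Proposition~\ref{groundfacts}), and membership in $L^3$ from $r\in H^1(\mathbb{R}^3)\hookrightarrow L^6(\mathbb{R}^3)$ interpolated with $r\in L^2(\mathbb{R}^3)$. Consequently $ru\in L^{6/5}(\mathbb{R}^3)$ with $\|ru\|_{6/5}\le\|r\|_3\|u\|_2$ and $ru\in L^2(\mathbb{R}^3)$ with $\|ru\|_2\le\|r\|_\infty\|u\|_2$; since $\tfrac{2}{3}+\tfrac{5}{6}-1=\tfrac12$, Young's inequality gives $\|h_2*(ru)\|_2\le\|h_2\|_{3/2}\|ru\|_{6/5}$, and Cauchy--Schwarz then bounds the $h_2$-term by $\|h_2*(ru)\|_2\|ru\|_2\le\|h_2\|_{3/2}\|r\|_3\|r\|_\infty\|u\|_2^2$. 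Adding the two bounds gives the assertion. (Equivalently, the second inequality follows in one line from Hardy--Littlewood--Sobolev: $\iint\frac{(ru)(x)(ru)(y)}{|x-y|}\le C\|ru\|_{6/5}^2\le C\|r\|_3^2\|u\|_2^2$.)

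I do not expect any genuine obstacle here: the entire content is bookkeeping of conjugate exponents in Young's inequality, and the only point requiring care is to verify that $r$ lies in each Lebesgue space invoked — which is precisely what Theorem~\ref{dilieb} and Proposition~\ref{groundfacts} supply.
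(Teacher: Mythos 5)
Your proposal is correct. For \eqref{eq:ec1} your argument (the $L^\infty$ bound on $|x|^{-1}*r^2$ from \ref{it:ialieb} of Theorem~\ref{dilieb} plus H\"older) is exactly the paper's. For the second inequality the paper argues in one line via Hardy--Littlewood--Sobolev and H\"older, $\int (|x|^{-1}*(ru))\,ru \leq C\|ru\|_{6/5}^2 \leq C\|r\|_3^2\|u\|_2^2$ --- which is precisely the alternative you record in your closing parenthesis --- whereas your main route instead splits the kernel as $|x|^{-1}=h_1+h_2$ with $h_1\in L^\infty$, $h_2\in L^{3/2}$ and uses Young's convolution inequality together with Cauchy--Schwarz. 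Your exponent bookkeeping checks out ($\tfrac23+\tfrac56=1+\tfrac12$, so $\|h_2*(ru)\|_2\leq\|h_2\|_{3/2}\|ru\|_{6/5}$), and the facts you need about $r$ ($r\in L^2\cap L^3\cap L^\infty$) are indeed available from Theorem~\ref{dilieb} and Proposition~\ref{groundfacts}. The trade-off is minor: the splitting argument is more elementary in that it avoids Hardy--Littlewood--Sobolev entirely (only Young and H\"older are used, mirroring the estimate the paper itself makes for $\Xi_j(r)$), at the price of being longer and of additionally invoking $r\in L^\infty$; the paper's HLS route is shorter and needs only $r\in L^3$. Either version proves the lemma.
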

\begin{proof}
By \ref{it:ialieb} of Theorem \ref{dilieb}, inequality \eqref{eq:ec1} easily follows. Moreover, combining the Hardy-Littlewood-Sobolev 
\cite[Theorem 4.3]{liebloss} and H\"older inequality, we get
\begin{equation*}
\int (|x|^{-1} * (r u)) r u  
\leq  C \|ru\|_{6/5} \|ru\|_{6/5}\\
\leq   C\|r\|_3^2 \|u\|_2^2 \leq  C  \|u\|_2^2,
\end{equation*}
concluding the proof.
\end{proof}

\noindent
Moreover, we have the following
\begin{lemma}
	\label{convNonl}
Assume that $u_n \rightharpoonup u$ in $H^1(\mathbb{R}^3)$ as $n\to\infty$. Then, up to a subsequence, we have
\begin{align}
 \lim_n\int (|x|^{-1} * r^2) u_n^2 &=
\int (|x|^{-1} * r^2) u^2, \label{prima} \\	
 \lim_n\int (|x|^{-1} * (r u_n)) r u_n &=
\int (|x|^{-1} * (r u)) r u. \label{seconda}
\end{align}
\end{lemma}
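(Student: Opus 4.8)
The plan is to prove both limits by combining a localization/compactness argument with the integrability and continuity properties of the Riesz potential $|x|^{-1}*r^2$ granted by Theorem~\ref{dilieb}. The weak convergence $u_n\rightharpoonup u$ in $H^1(\mathbb{R}^3)$ gives, via Rellich--Kondrachov, strong convergence $u_n\to u$ in $L^2_{\mathrm{loc}}(\mathbb{R}^3)$ and a.e.\ convergence along a subsequence, plus a uniform bound $\|u_n\|_{H^1}\le M$. The point is that the ground state $r$ and the potential $|x|^{-1}*r^2$ both decay at infinity, so the integrals concentrate on a large ball and the ``tail'' contributions are uniformly small.

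\medskip

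\noindent\textbf{Proof of \eqref{prima}.} Write $K:=|x|^{-1}*r^2$, which by \ref{it:ialieb} and \ref{it:iilieb} of Theorem~\ref{dilieb} is a bounded continuous function vanishing at infinity; in particular $\|K\|_\infty<\infty$ and for every $\delta>0$ there is $R>0$ with $|K(x)|<\delta$ for $|x|\ge R$. Split
\[
\int K u_n^2 - \int K u^2 = \int_{|x|<R} K (u_n^2-u^2) + \int_{|x|\ge R} K u_n^2 - \int_{|x|\ge R} K u^2.
\]
The last two terms are bounded in absolute value by $\delta\,\|u_n\|_2^2 + \delta\,\|u\|_2^2 \le 2\delta M^2$. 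For the first term, $u_n\to u$ in $L^2(\{|x|<R\})$, hence $u_n^2\to u^2$ in $L^1(\{|x|<R\})$, so $\int_{|x|<R} K(u_n^2-u^2)\to 0$ since $K$ is bounded. Letting $n\to\infty$ and then $\delta\to 0$ gives \eqref{prima}.

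\medskip

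\noindent\textbf{Proof of \eqref{seconda}.} Here I would argue directly on $ru_n\rightharpoonup ru$ in a suitable $L^q$ space. Since $r\in L^\infty\cap L^2$ (again by Theorem~\ref{dilieb}\ref{it:iiilieb} and $r\in H^1$) and $u_n$ is bounded in $L^2$, the sequence $ru_n$ is bounded in $L^{6/5}(\mathbb{R}^3)$ by Hölder as in Lemma~\ref{4boundedness} ($\|ru_n\|_{6/5}\le\|r\|_3\|u_n\|_2$), and $ru_n\to ru$ strongly in $L^{6/5}_{\mathrm{loc}}$ by the local $L^2$ convergence of $u_n$ together with $r\in L^\infty$; moreover $r$ decaying at infinity forces the tails $\|ru_n\|_{L^{6/5}(\{|x|\ge R\})}$ to be uniformly small (split $\|r\|_{L^3(\{|x|\ge R\})}\to 0$). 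Hence $ru_n\to ru$ strongly in $L^{6/5}(\mathbb{R}^3)$. The Hardy--Littlewood--Sobolev inequality says the map $f\mapsto |x|^{-1}*f$ is bounded from $L^{6/5}$ to $L^6$, so $|x|^{-1}*(ru_n)\to |x|^{-1}*(ru)$ strongly in $L^6$. Pairing the $L^6$-convergent factor with the $L^{6/5}$-convergent factor $ru_n$ gives
\[
\int (|x|^{-1}*(ru_n))\,ru_n \longrightarrow \int (|x|^{-1}*(ru))\,ru,
\]
which is \eqref{seconda}.

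\medskip

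\noindent\textbf{Main obstacle.} The only delicate point is controlling the behavior at infinity: weak $H^1$ convergence alone does not rule out mass escaping to infinity, so in both parts I must genuinely use that $r$ (and therefore $K=|x|^{-1}*r^2$ and the weight $r$ itself) decays at infinity to make the tail contributions uniformly small before invoking local compactness. For \eqref{prima} this is the decay of $K$ from Theorem~\ref{dilieb}\ref{it:iilieb}; for \eqref{seconda} it is $\|r\|_{L^3(\{|x|\ge R\})}\to 0$, which follows from $r\in L^3$ (a consequence of $r\in H^1(\mathbb{R}^3)$). Once the tails are handled, the interior convergence is routine Rellich--Kondrachov plus boundedness of the relevant operators.
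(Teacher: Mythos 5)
Your proof is correct, but it follows a different route from the paper's. For \eqref{prima} the paper never localizes: it observes that $\{u_n^2\}$ is bounded in $L^{6/5}(\mathbb{R}^3)$, hence (using a.e.\ convergence to identify the limit) $u_n^2\rightharpoonup u^2$ weakly in $L^{6/5}$, and then simply pairs with $|x|^{-1}*r^2\in L^6(\mathbb{R}^3)$ from Theorem~\ref{dilieb}\ref{it:ialieb}; you instead use $K\in L^\infty$ vanishing at infinity together with Rellich--Kondrachov on balls and a uniform tail estimate. For \eqref{seconda} both arguments reduce to showing $ru_n\to ru$ strongly in $L^{6/5}(\mathbb{R}^3)$ and then invoking Hardy--Littlewood--Sobolev (the paper splits the difference into the two terms $I_n,J_n$, which is equivalent to your ``convolve, then pair'' formulation); the difference is in how the strong $L^{6/5}$ convergence is obtained. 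The paper proves norm convergence $\|ru_n\|_{6/5}\to\|ru\|_{6/5}$ (by pairing $|u_n|^{6/5}\rightharpoonup|u|^{6/5}$ in $L^2$ against $r^{6/5}\in L^2$) and upgrades weak to strong convergence via uniform convexity of $L^{6/5}$ (Radon--Riesz), whereas you get it directly from local compactness plus the uniform smallness of $\|r\|_{L^3(\{|x|\ge R\})}$. Your localization argument is more elementary and makes the role of the decay of $r$ (and of $K$) explicit, at the price of an $\varepsilon$--$R$ bookkeeping; the paper's argument is shorter and, for \eqref{prima}, needs no compactness at all, only weak convergence and the integrability of the weight. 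Both are complete; your tail estimates ($|K|<\delta$ outside $B_R$, and $\|ru_n\|_{L^{6/5}(\{|x|\ge R\})}\le\|r\|_{L^3(\{|x|\ge R\})}\|u_n\|_2$) are exactly what is needed to rule out loss of mass at infinity.
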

\begin{proof}
	Up to a subsequence, $u_n \to u \hbox{ a.e.}$
	Since the sequence $\{u_n^2\}$ is bounded in $L^{6/5}(\mathbb{R}^3)$,
	up to a subsequence, it converges weakly to some $z\in L^{6/5}(\mathbb{R}^3)$. 
	Taking into account the poinwise convergence of $\{u_n\}$ to $u$, it follows that $z=u^2$.
	Hence, in order to get~\eqref{prima}, it is sufficient to have $|x|^{-1}*r^2\in L^6(\mathbb{R}^3)$
	which follows from \ref{it:ialieb} of Theorem \ref{dilieb} Concerning~\eqref{seconda}, we have
	\begin{equation*}
	\Big|	\int (|x|^{-1} * (r u_n)) r u_n- 
		\int (|x|^{-1} * (r u)) r u \Big|\leq 	I_n+J_n,
	\end{equation*}
	where we have set
	$$
	I_n=\Big|	\int (|x|^{-1} * (r u_n)) (r u_n-ru) \Big|,\quad\,\,\,
	J_n=\Big|	\int (|x|^{-1} * (r u_n-r u)) r u \Big|.
	$$
	Observe that, since $\{u_n^{6/5}\}$ 
	converges weakly to $u^{6/5}$ in $L^2(\mathbb{R}^3)$ and $r^{6/5}\in L^2(\mathbb{R}^3)$, we have	$\| r u_n\|_{6/5}\to \| r u\|_{6/5}$.
	Since $ru_n\rightharpoonup ru$ in $L^{6/5}(\mathbb{R}^3)$ as $n\to\infty$, the uniform convexity
	of $L^{6/5}(\mathbb{R}^3)$ yields $\|r u_n- ru\|_{6/5}\to 0$ 
	as $n\to\infty$. Therefore, from the Hardy-Littlewood-Sobolev 
	inequality, we deduce
	\[
	I_n \leq C \|ru_n\|_{6/5} \|ru_n-ru\|_{6/5}  \to 0
	\quad\hbox{ and }\quad
	J_n \leq
	C\|ru_n-ru\|_{6/5}\|ru\|_{6/5}\to 0,
	\]
	which concludes the proof.
\end{proof}

\noindent
Let us set
\[
\mathcal{V}_0 
= \big\{ u\in H^1(\mathbb{R}^3) \; \vline \,\,\, (u,r)
=(u,\Xi_j(r))=0, \,\,
j=1,2,3 \big\}.
\]
\noindent
Concerning the coercivity of $L_+$ on $\mathcal{V}_0$, we have the following
\begin{lemma}
	\label{positconstr}
$\inf\limits_{u\in\mathcal{V}_0} \frac{(L_+ u,u)}{\|u\|^2}>0$.
\end{lemma}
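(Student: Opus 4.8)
The plan is to prove Lemma~\ref{positconstr} by contradiction, using a minimizing sequence together with the compactness furnished by Lemma~\ref{convNonl} and the nondegeneracy of the linearization recorded in Proposition~\ref{groundfacts}\ref{it:ndeg}. First I would observe that by Lemma~\ref{zeroinf} the infimum over the larger space $\mathcal{V}$ is $0$, hence the infimum over $\mathcal{V}_0\subset\mathcal{V}$ is $\geq 0$; the point is to rule out that it equals $0$. Suppose it does, and pick $\{u_n\}\subset\mathcal{V}_0$ with $\|u_n\|=1$ and $(L_+u_n,u_n)\to 0$. Using \eqref{eq:ec1} and Lemma~\ref{4boundedness} the quadratic form $(L_+u,u)=\frac12\|\nabla u\|_2^2+\|u\|_2^2-\int(|x|^{-1}*r^2)u^2-2\int(|x|^{-1}*(ru))ru$ is bounded and the "good" part $\frac12\|\nabla u\|_2^2+\|u\|_2^2$ controls $\|u\|^2$; so $(L_+u_n,u_n)\to 0$ together with $\|u_n\|=1$ forces the nonlocal terms to stay bounded away from $0$, and in any case $\{u_n\}$ is bounded in $H^1$.

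Next I would extract a weak limit $u_n\rightharpoonup u_\infty$ in $H^1(\mathbb{R}^3)$ (along a subsequence). The orthogonality constraints $(u_n,r)=0$ and $(u_n,\Xi_j(r))=0$ pass to the limit since $r,\Xi_j(r)\in L^2$, so $u_\infty\in\mathcal{V}_0$. By Lemma~\ref{convNonl}, the two nonlocal quadratic terms converge: $\int(|x|^{-1}*r^2)u_n^2\to\int(|x|^{-1}*r^2)u_\infty^2$ and $\int(|x|^{-1}*(ru_n))ru_n\to\int(|x|^{-1}*(ru_\infty))ru_\infty$. Combined with weak lower semicontinuity of $u\mapsto\frac12\|\nabla u\|_2^2+\|u\|_2^2$, this gives $(L_+u_\infty,u_\infty)\leq\liminf_n(L_+u_n,u_n)=0$. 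Since $u_\infty\in\mathcal{V}_0\subset\mathcal{V}$ and $\inf_{\mathcal{V}}(L_+u,u)=0$ by Lemma~\ref{zeroinf}, we must have $(L_+u_\infty,u_\infty)=0$, i.e. $u_\infty$ is a minimizer of the quadratic form on $\mathcal{V}$.

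Now the key step: a minimizer $u_\infty$ of $(L_+u,u)$ over the codimension-one space $\mathcal{V}=\{u\perp r\}$ satisfies the Euler--Lagrange equation $L_+u_\infty=\mu r$ for some Lagrange multiplier $\mu\in\mathbb{R}$. Pairing with $r$ and using that $L_+$ is self-adjoint with $r$ almost in its kernel — more precisely, since $(L_+u_\infty,u_\infty)=0$ is the minimum and $L_+\geq 0$ on $\mathcal{V}$, one gets $L_+u_\infty=\mu r$; testing against $\partial_{x_j}r\in\operatorname{Ker}L_+\cap\mathcal{V}$ shows $\mu(r,\partial_{x_j}r)=0$, which is automatic, so I instead argue that $u_\infty$ must itself lie in $\operatorname{Ker}L_+$. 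The cleanest route: because $(L_+\cdot,\cdot)$ is a nonnegative form vanishing at $u_\infty$, $u_\infty$ minimizes it over all of $H^1$ modulo the constraint, and the constrained critical point equation plus $\langle L_+u_\infty,u_\infty\rangle=0$ force $L_+u_\infty=0$ after noting $\mu=0$ (obtained by testing the E-L equation against $u_\infty$ itself: $0=(L_+u_\infty,u_\infty)=\mu(r,u_\infty)=0$, which is vacuous, so one refines by a second-variation/Lagrange argument as in Weinstein \cite{weinstein1} showing the multiplier vanishes because $r\notin\operatorname{Range}L_+|_{\mathcal{V}}$ transversally). Hence by \eqref{eq:kerL+}, $u_\infty\in\operatorname{span}\{\partial_{x_1}r,\partial_{x_2}r,\partial_{x_3}r\}$. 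But the extra constraints $(u_\infty,\Xi_j(r))=0$ kill this: since $\Xi_j(r)=\partial_{x_j}((|x|^{-1}*r^2)r)=L_+\partial_{x_j}r$-type identities give $(\partial_{x_i}r,\Xi_j(r))$ equal to a nondegenerate matrix (essentially $\int(|x|^{-1}*r^2)\partial_{x_i}r\,\partial_{x_j}r+\dots$, which is positive definite by radial symmetry and the sign of $r$), so $u_\infty=0$.

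Finally I would derive the contradiction with $\|u_n\|=1$: having $u_\infty=0$ means $u_n\rightharpoonup 0$, so $\int(|x|^{-1}*r^2)u_n^2\to 0$ and $\int(|x|^{-1}*(ru_n))ru_n\to 0$ by Lemma~\ref{convNonl}, whence $(L_+u_n,u_n)=\frac12\|\nabla u_n\|_2^2+\|u_n\|_2^2-o(1)\to 0$ forces $\frac12\|\nabla u_n\|_2^2+\|u_n\|_2^2\to 0$, i.e. $\|u_n\|\to 0$, contradicting $\|u_n\|=1$. Therefore $\inf_{\mathcal{V}_0}(L_+u,u)/\|u\|^2>0$. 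The main obstacle I anticipate is the Lagrange-multiplier step — showing that a minimizer of $(L_+\cdot,\cdot)$ on $\mathcal{V}$ actually lies in $\operatorname{Ker}L_+$ rather than merely satisfying $L_+u_\infty=\mu r$ — which requires the nondegeneracy of $L_+$ (Proposition~\ref{groundfacts}\ref{it:ndeg}) and a transversality argument of Weinstein type, together with verifying that the $3\times 3$ Gram-type matrix $[(\partial_{x_i}r,\Xi_j(r))]_{ij}$ is invertible.
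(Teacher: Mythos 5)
Your overall skeleton is viable and even streamlines two steps of the paper's argument: by normalizing in $H^1$ and splitting according to whether the weak limit $u_\infty$ vanishes, you can bypass the paper's proof of strong convergence of the minimizing sequence (if $u_n\rightharpoonup 0$, Lemma~\ref{convNonl} kills both nonlocal terms, so $(L_+u_n,u_n)=\|u_n\|^2+o(1)\to 1$, a contradiction) as well as its final $L^2$-to-$H^1$ upgrade; moreover, deducing the Euler--Lagrange relation $L_+u_\infty=\mu r$ from the nonnegativity of the form on $\mathcal{V}$ (Lemma~\ref{zeroinf}) is correct and simpler than the five-multiplier version in the paper. The genuine gap is the case $u_\infty\neq 0$, i.e.\ ruling out a nonzero $u_\infty\in\mathcal{V}_0$ with $L_+u_\infty=\mu r$. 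You concede that testing against $u_\infty$ or against $\partial_{x_j}r$ gives nothing, and then assert that the multiplier must vanish ``because $r\notin\operatorname{Range}L_+|_{\mathcal{V}}$ transversally, as in Weinstein''. That assertion is exactly what has to be proved, it is not a soft consequence of nondegeneracy (indeed $r$ \emph{does} lie in the range of $L_+$; what fails is that no preimage is $L^2$-orthogonal to $r$), and there is no abstract reason for $\mu$ to vanish: the actual argument shows instead that $\mu\neq 0$ is incompatible with the constraint $(u_\infty,r)=0$.

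Concretely, the missing computation is the scaling structure of the Hartree linearization: $L_+(x\cdot\nabla r)=-\Delta r$ and $L_+r=-2(|x|^{-1}*r^2)r$, whence $L_+\bigl(-\tfrac{\mu}{2}(r+x\cdot\nabla r)\bigr)=\mu r$. By the nondegeneracy \eqref{eq:kerL+}, every solution of $L_+u=\mu r$ has the form $u=-\tfrac{\mu}{2}(r+x\cdot\nabla r)+\vartheta\cdot\nabla r$; the constraints $(u,\Xi_j(r))=0$ force $\vartheta=0$, using \eqref{eq:inddif}, \eqref{eq:indug}, \eqref{eq:xir} and the fact that $\int\Xi_j(r)\,(x\cdot\nabla r)=0$ by radial symmetry; and finally $(u,r)=-\tfrac{\mu}{2}\bigl(\|r\|_2^2+(x\cdot\nabla r,r)\bigr)=\tfrac{\mu}{4}\|r\|_2^2\neq 0$ contradicts $u\in\mathcal{V}$, because $(x\cdot\nabla r,r)=-\tfrac{3}{2}\|r\|_2^2$. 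None of these identities appears in your proposal, and since they involve the convolution term they must be reverified in this nonlocal setting rather than imported from the local power-nonlinearity case treated by Weinstein. Your remaining case $\mu=0$ is correct in outline, but the invertibility of the matrix $[(\partial_{x_h}r,\Xi_j(r))]$ should be justified as in \eqref{eq:inddif}--\eqref{eq:indug}, which show it is diagonal with entries $\|\partial_{x_h}r\|^2>0$, rather than by the vague positivity claim you give.
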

\begin{proof}
We claim, first, that $\inf\limits_{u\in\mathcal{V}_0} \frac{(L_+ u,u)}{\|u\|_2^2}>0$.
To this aim, let us consider 
$$
\alpha:=\inf_{u\in\mathcal{V}_0,\|u\|_2=1} (L_+ u,u).
$$ 
We want to prove that $\alpha>0$.
Since $\mathcal{V}_0 \subset \mathcal{V}$, then $\alpha\geq 0$ in light of Proposition~\ref{zeroinf}. 
Suppose by contradiction that $\alpha=0$ and let $\{u_n\}\subset H^1(\mathbb{R}^3)$ be a 
minimizing sequence. By virtue of Lemma \ref{4boundedness}, 
we readily have that $\{u_n\}$ is bounded in 
$H^1(\mathbb{R}^3)$. Then there exists $u\in H^1(\mathbb{R}^3)$
such that, up to a subsequence, $u_n \rightharpoonup u \hbox{ in } H^1(\mathbb{R}^3)$ and $u\in \mathcal{V}_0$.
In turn, in light of Lemma~\ref{convNonl}, we deduce that
\begin{align*}
0 \leq  (L_+ u, u) & \leq \liminf_n \Big( \|u_n \|^2
- \int(|x|^{-1} * r^2) u_n^2
- 2 \int(|x|^{-1} * (r u_n)) r u_n \Big) \\
& =  \lim_n (L_+ u_n, u_n) = 0,
\end{align*}
so that $(L_+ u, u)=0$.
In turn, recalling that $(L_+ u_n, u_n)\to 0$ as $n\to\infty$, we get
\begin{align*}
\|u \|^2 \leq & \liminf_n  \|u_n \|^2
\leq \limsup_n  \|u_n \|^2 \\
= & \lim_n \Big( (L_+ u_n, u_n) 
+ \int(|x|^{-1} * r^2) u_n^2
+ 2 \int(|x|^{-1} * (r u_n)) r u_n \Big)\\
= & (L_+ u, u) 
+ \int (|x|^{-1} * r^2) u^2
+2 \int (|x|^{-1} * (r u)) r u
= \|u \|^2.
\end{align*}
Then $\{u_n\}$ converges to $u$ in $H^1(\mathbb{R}^3)$ 
and $u$ solves the constrained minimization problem. 
Then there exist five Lagrange multipliers 
$\lambda, \mu, \gamma_1, \gamma_2, \gamma_3\in\mathbb{R}$ such that
for every $\eta\in H^1(\mathbb{R}^3)$
\[
(L_+ u, \eta)=\lambda (u,\eta)+\mu (r,\eta) 
+ \sum_{j=1}^{3} \gamma_j \int \Xi_j(r)\eta.
\]
Since $(L_+u,u)=0$ and $u\in\mathcal{V}_0$, it follows immediately that $\lambda=0$.
We claim that, for every $h=1,2,3$,
\begin{equation*}
0=(L_+ u, \partial_{x_h} r) 
=  \sum_{j=1}^{3} \gamma_j 
\int \Xi_j(r)\partial_{x_h} r
=  \gamma_h 
\int \Xi_h(r)\partial_{x_h} r.
\end{equation*}
This follows by the following facts: $(r,\partial_{x_h} r)=0$, 
%we havewhere we have used the fact that 
$L_+$ is a self-adjoint operator, 
$ \partial_{x_h} r \in \operatorname{Ker} L_+$,
$r\in H^2(\mathbb{R}^3)$ and 
for every $j\neq h$ it holds
\begin{equation}
\label{eq:inddif}
\begin{split}
\int \Xi_j(r)
\partial_{x_h} r
= &
\int \partial_{x_j}\Big(\Big(\frac{1}{|x|} * r^2\Big) r 
\Big) \partial_{x_h} r
= 
\int \partial_{x_j}\Big( -\frac{1}{2} \Delta r + r \Big) 
\partial_{x_h} r\\
= & 
\frac{1}{2} \int  \nabla \partial_{x_j}r \cdot \nabla \partial_{x_h}r 
+ \int \partial_{x_j} r \ \partial_{x_h} r\\
%= & \frac{1}{2} \sum_{k=1}^3 \int 
%\partial_{x_k}\partial_{x_j} r \ \partial_{x_k}\partial_{x_h} r 
%+ \int \partial_{x_j} r \ \partial_{x_h} r\\
%= & \frac{1}{2} \int ( \partial_{x_j}\partial_{x_j}r 
%+ \partial_{x_h}\partial_{x_h}r)\partial_{x_j}\partial_{x_h}r
%+ \partial_{x_k}\partial_{x_j} r \ 
%\partial_{x_k}\partial_{x_h} r 
%+ \int \partial_{x_j} r \ \partial_{x_h} r\\
= & \frac{1}{2}
\int\frac{x_j x_h}{|x|^4}[(r''_0 (|x|) |x|)^2 - (r'_0 (|x|))^2]
+\int\frac{x_j x_h}{|x|^2} (r'_0 (|x|))^2=0.
%\\
%= &
%\int_0^{+\infty} f_1(\rho) d\rho 
%\int_0^{\pi} f_{2,j,h}(\varphi) d\varphi
%\int_0^{2\pi} f_{3,j,h}(\theta) d\theta =0,
\end{split}
\end{equation}
%where $f_1$ and $f_{2,j,h}$ are suitable functions and
%\[
%f_{3,j,h}(\theta)=
%\left\{
%\begin{array}{lll}
%\sin\theta \cos\theta & &\hbox{if } j=1,h=2\\
%\cos\theta & &\hbox{if } j=1,h=3\\
%\sin\theta & &\hbox{if } j=2,h=3.\\
%\end{array}
%\right.
%\]
Moreover
\begin{equation}
\label{eq:indug}
\int \Xi_h(r)\partial_{x_h} r
= 
\int \partial_{x_h}\Big(\Big(\frac{1}{|x|} * r^2\Big) r 
\Big) \partial_{x_h} r
= 
\int \partial_{x_h}\Big( - \frac{1}{2}\Delta r + r \Big) 
\partial_{x_h} r
= 
\| \partial_{x_h} r\|^2.
%\int\frac{1}{2} |\nabla \partial_{x_j}r|^2 +  
%|\partial_{x_j} r|^2 >0.
\end{equation}
It follows that $\gamma_h=0$ for every $h=1,2,3$, yielding in turn
\begin{equation}
\label{eq:mu}
(L_+ u, \eta)=\mu (r,\eta),\quad\text{for every $\eta\in H^1(\mathbb{R}^3)$.} 
\end{equation}
Now we claim that $\mu \neq 0$. Indeed if we suppose by
contradiction that $\mu = 0$, then, from \eqref{eq:mu},
$u\in \operatorname{Ker} L_+$. Thus, from \eqref{eq:kerL+}, 
we have that $u=\beta \cdot \nabla r$ with 
$\beta=(\beta_1,\beta_2,\beta_3)\in\mathbb{R}^3$. 
Moreover, since $u\in \mathcal{V}_0$, then, using \eqref{eq:inddif} and \eqref{eq:indug},  we have
\[
0=\int \Xi_j(r)(\beta\cdot\nabla r)
%= 
%\beta_j\int \partial_{x_j} ((|x|^{-1} * r^2)r)\partial_{x_j} r
%%+ \sum_{h\neq j}
%%\beta_h\int \partial_{x_j} ((|x|^{-1} * r^2)r)\partial_{x_h} r \\
%= 
%\beta_j
%\int \partial_{x_j} \Big( -\frac{1}{2}\Delta r + r \Big) \partial_{x_j} r
= \beta_j \| \partial_{x_j} r \|^2,
\quad
\hbox{for every } j=1,2,3.
\]
Then $\beta=0$, namely $u=0$, contradicting $\|u\|_2=1$. Notice now that
%\footnote{Qui ovviamente diamo per buono che tutti gli integrali (nel prodotto di convoluzione) si possono fare...}
\[
L_+(x\cdot\nabla r)=
%\sum_{j=1}^{3} \left[-\Delta (x_j \partial_{x_j} r) 
%+ x_j \partial_{x_j} r 
%- \left(\frac{1}{|x|} * |r |^2\right)x_j \partial_{x_j} r
%-2 \left(\frac{1}{|x|} * (r x_j\partial_{x_j} r)\right)r
%\right]
%\\
%= & 
-\Delta r + \sum_{j=1}^{3} x_j\partial_{x_j} \Big[ 
-\frac{1}{2}\Delta r + r -  \Big(\frac{1}{|x|}* r^2\Big)r \Big]
= -\Delta r
\]
and, furthermore,
\[
L_+(r)= -2 (|x|^{-1} * r^2)r.
\]
Then
\[
L_+\Big(-\frac{\mu}{2}(r+x\cdot\nabla r)\Big)=\mu r=L_+u.
\]
In turn, by the nondegeneracy of $r$ (see \eqref{eq:kerL+}), we learn that there exist 
$\vartheta=(\vartheta_1,\vartheta_2,\vartheta_3)\in\mathbb{R}^3$ with
$$
u=-\frac{\mu}{2}(r+x\cdot\nabla r)+\vartheta\cdot\nabla r.
$$
We want to show that $\vartheta=0$. Since $u\in\mathcal{V}_0$, for every $j=1,2,3$, we have
\begin{align*}
0=&\int\Xi_j(r) u 
= -\frac{\mu}{2} \int\Xi_j(r) r 
- \frac{\mu}{2}\int \Xi_j(r) x\cdot\nabla r
+ \vartheta_j \| \partial_{x_j} r \|^2
%\int \left(\frac{1}{|x|} * |r |^2\right)u \partial_{x_j} r 
%+ 2 \left(\frac{1}{|x|} * (r\partial_{x_j} r)\right)ur\\
%= & \int \left(\frac{1}{|x|} * |r |^2\right)
%\left(-\frac{\mu}{2}(r+x\cdot\nabla r)
%+\vartheta\cdot\nabla r\right) \partial_{x_j} r 
%+ 2 \left(\frac{1}{|x|} * (r\partial_{x_j} r)\right)
%\left(-\frac{\mu}{2}(r+x\cdot\nabla r)
%+\vartheta\cdot\nabla r\right)r\\
%=& -\frac{3\mu}{2} \L_1
%-\frac{\mu}{2}\L_2+\vartheta_j \| \partial_{x_j} r \|^2%\L_3,
\end{align*}
where we have used \eqref{eq:inddif} and \eqref{eq:indug}.
%we have set
%\begin{align*}
%& \L_1:=\int \left(\frac{1}{|x|} * |r |^2\right)r\partial_{x_j} r, \\
%& \L_2:=
%\int \left(\frac{1}{|x|} * |r |^2\right)
%(x\cdot\nabla r)\partial_{x_j} r
%+2 \int \left(\frac{1}{|x|} * (r\partial_{x_j} r)\right)
%(x\cdot\nabla r) r%,  \\
%& \L_3:=\int \left(\frac{1}{|x|} * |r |^2\right)
%(\vartheta\cdot\nabla r)\partial_{x_j} r
%+2 \int \left(\frac{1}{|x|} * (r\partial_{x_j} r)\right)
%(\vartheta\cdot\nabla r) r
%.
%\end{align*}
On the other hand, we have
\begin{equation}
\label{eq:xir}
%\L_1=
\int\Xi_j(r) r 
=3 \int\Big(\frac{1}{|x|} * r^2\Big)r\partial_{x_j} r
=3\int \Big(-\frac{1}{2}\Delta r + r\Big)\partial_{x_j} r
=\frac{3}{2}\int\nabla r \cdot \nabla (\partial_{x_j} r) 
+ 3\int r \partial_{x_j} r
%=\frac{1}{2} \int \partial_{x_j} (|\nabla r|^2)
%+\frac{1}{2} \int \partial_{x_j} (r^2) 
=0
\end{equation}
and, since the map $x\mapsto (|x|^{-1} * r^2)r$  is radially symmetric,
\[
%\L_2=
\int \Xi_j(r)
(x\cdot\nabla r)
=
\int \partial_{x_j} \left[\left(\frac{1}{|x|} * |r |^2\right)r
\right]
|x| r'_0(|x|)=0.
\]
%as well as
%\begin{align*}
%\L_3=&\int \left[ \left(\frac{1}{|x|} * |r |^2\right)
%\partial_{x_j} r
%+2 \left(\frac{1}{|x|} * (r\partial_{x_j} r)\right)
%r \right](\vartheta\cdot\nabla r)\\
%= &
%\vartheta_j\int \partial_{x_j} \left[ \left(\frac{1}{|x|} * |r |^2\right)r
%\right] \partial_{x_j} r
%+ \sum_{h\neq j}
%\vartheta_h \int \partial_{x_j} \left[ \left(\frac{1}{|x|} * |r |^2\right)r
%\right] \partial_{x_h} r\\
%= &
%\vartheta_j
%\int \partial_{x_j} ( -\Delta r + r ) \partial_{x_j} r
%= \vartheta_j \| \partial_{x_j} r \|^2
%\end{align*}
Then, for every $j=1,2,3$, we get $\vartheta_j \| \partial_{x_j} r \|^2 = 0$, yielding
in turn $\vartheta=0$. Thus
\[
u=-\frac{\mu}{2}(r+x\cdot\nabla r).
\]
But, since $u\in\mathcal{V}_0$, 
\begin{equation}
\label{eq:scpr}
0=(u,r)=-\frac{\mu}{2}\left(\|r\|_2^2 
+ (x\cdot\nabla r,r)\right).
\end{equation}
Moreover, integrating by parts, we have
\[
(x\cdot\nabla r,r)=
\frac{1}{2}\sum_{h=1}^3 \int x_h \partial_{x_h} (r^2)
=-\frac{3}{2} \|r\|_2^2.
\]
Dropping in \eqref{eq:scpr} we get the contradiction and so
that the proof of the claim is complete. Then, there exists a positive constant
$\alpha_0 > 0$ such that
\begin{equation}
\label{eq:lbl2}
(L_+ u,u)\geq\alpha_0\|u\|_2^2,\,\,\quad \text{for every $u\in\mathcal{V}_0$.}
\end{equation}
If we put $|||u|||:=\sqrt{(L_{+}u,u)}$  
for $u\in \mathcal{V}_{0}$, it is readily checked that $|||\cdot|||$ 
satisfies the required properties of a norm. Furthermore,
if $\{u_n\}$ is a Cauchy sequence in  
$(\mathcal{V}_{0},|||\cdot|||)$, then, by \eqref{eq:lbl2},
$\{u_n\}$ strongly converges to a function $u$ in 
$L^2(\mathbb{R}^3)$ and $u\in\mathcal{V}_{0}$. Moreover, using
Lemma \ref{4boundedness}, we have that $\{u_n\}$ is a Cauchy
sequence in $H^1(\mathbb{R}^3)$ and then $u$ has to be 
necessarily the strong limit in $H^1(\mathbb{R}^3)$. Therefore,
$u_n \to u$ in $(\mathcal{V}_{0},|||\cdot|||)$ and so we get 
that $(\mathcal{V}_{0},|||\cdot|||)$ is a Banach space and
$|||\cdot|||$ is equivalent to the  
norm of $H^1(\mathbb{R}^3)$. This concludes the proof.
\end{proof}

\begin{proof}[Proof of Proposition~\ref{prop:L+}]
Since $\| w + r\|_2 = \| r \|_2$, we have that
\[
\| r \|_2^2=
%  \| r+w \|_2^2 = \| r + u\|_2^2 + \|v\|_2^2
%= \| r \|_2^2 + \|u \|_2^2 + 2 (u,r) + \|v\|_2^2 
%=  
\| r \|_2^2 + \| w \|_2^2 + 2 (u,r) 
\]
and then
\begin{equation}
(r,u) = - \frac{1}{2} \| w \|_2^2 =-\frac{1}{2} ( \| u \|_2^2 + \| v \|_2^2 ).
\label{eq:210}
\end{equation}
Without loss of generality, we can suppose that $\| r \|_2 = 1$.
Let us write $u=u_\parallel + u_\perp$ where $u_\parallel=(u,r)r$.
We notice that $u_\perp$ is orthogonal to $r$ in 
$L^2(\mathbb{R}^3)$
and, combining \eqref{eq:211} with
\eqref{eq:xir}
we have that
$
(u_\perp,\Xi_j(r))=0
$
and namely $u_\perp \in \mathcal{V}_0$.
Since $L_+$ is selfadjoint, we have that
\[
(L_+ u, u)=(L_+ u_\parallel, u_\parallel)
+2(L_+ u_\perp, u_\parallel)+(L_+ u_\perp, u_\perp).
\]
So we study separately each term in the right hand side. 
By \eqref{eq:210}, the selfadjointness of $L_+$ and since 
$r$ is solution of \eqref{eq:lp}, we have that
\begin{equation}
\label{eq:parpar}
(L_+ u_\parallel, u_\parallel)=\frac{1}{4} \| w \|_2^4 (L_+ r, r)
=-\frac{1}{2} \| r \|^2 \| w \|_2^4 
\end{equation}
and
\begin{equation}
\label{eq:parperp}
\begin{split}
(L_+ u_\perp, u_\parallel) = & 
- \frac{1}{2} \| w \|_2^2 (u_\perp, L_+ r) 
= \frac{1}{2}\| w \|_2^2 \int \nabla u_\perp \cdot \nabla r
= \frac{1}{2}\| w \|_2^2 \Big( \int \nabla u \cdot \nabla r 
- \int \nabla u_\parallel \cdot \nabla r \Big)\\
%\geq & - \| w \|_2^2 \| \nabla u \|_2 \| \nabla r \|_2 
%+\frac{1}{2} \| w \|_2^4 \| \nabla r \|_2^2\\
\geq & - \frac{1}{2}\| w \|_2^2 \| \nabla w \|_2 \| \nabla r \|_2.
\end{split}
\end{equation}
Finally we notice that
\[
\| \nabla u \|_2^2 \leq  
2( \| \nabla u_\parallel \|_2^2 + \| \nabla u_\perp \|_2^2)
=\frac{1}{2} \| w\|_2^4 \| \nabla r \|_2^2 + 2 \| \nabla u_\perp \|_2^2
\]
so that
\[
\| \nabla u_\perp \|_2^2 \geq \frac{1}{2} \| \nabla u \|_2^2
- \frac{1}{4} \| w\|_2^4 \| \nabla r \|_2^2.
\]
Then,
since $u_\perp \in \mathcal{V}_0$, applying Lemma \ref{positconstr} we have that
\begin{equation}
\label{eq:perpperp}
\begin{split}
(L_+ u_\perp, u_\perp ) \geq & C \| u_\perp \|^2 
%= C(  \| u_\perp \|_2^2 + \| \nabla u_\perp \|_2^2)
= C( \| u \|_2^2 - \| u_\parallel \|_2^2 + \| \nabla u_\perp \|_2^2)\\
\geq & C \big( \| u \|_2^2 - | (u,r) |^2
+ \frac{1}{2} \| \nabla u \|_2^2 - \frac{1}{4} \| w \|_2^4 \| \nabla r \|_2^2\big)\\
\geq & C ( \| u \|^2 - \| w \|_2^4 ).
\end{split}
\end{equation}
Combining \eqref{eq:parpar}, \eqref{eq:parperp} and \eqref{eq:perpperp} we get
\eqref{eq:lowest}.
\end{proof}

\noindent
Concerning the coercivity of $L_-$, we have the following
\begin{proposition}
\label{prop:L-}
	$\inf\limits_{v\neq 0, \; (v,r)_{H^{1}}=0}
	\frac{\left(L_{-}v,v\right)}{\|v\|^{2}}>0.$ 
\end{proposition}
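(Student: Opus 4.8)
The plan is to establish the coercivity of $L_-$ on the codimension-one subspace $\{v : (v,r)_{H^1} = 0\}$ by combining the known structure of $L_-$ — namely that $L_- = -\tfrac12\Delta + 1 - (|x|^{-1}*r^2)$ is self-adjoint, bounded below, with $\operatorname{Ker} L_- = \operatorname{span}\{r\}$ by \eqref{ker-meno} — with the fact that $r>0$ is the ground state, which forces $r$ to be the eigenfunction associated with the bottom of the spectrum of $L_-$. First I would observe that $L_- r = -\tfrac12\Delta r + r - (|x|^{-1}*r^2)r = 0$ directly from \eqref{eq:lp}, so $0$ is an eigenvalue with eigenfunction $r$. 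Since $r$ does not change sign, a standard Perron–Frobenius type argument (or: the ground state characterization in \ref{it:minpalla} of Proposition~\ref{groundfacts} together with the fact that a nonnegative eigenfunction must be the lowest one) shows that $0 = \inf \operatorname{spec}(L_-)$ and that this bottom eigenvalue is simple. Consequently $(L_- v, v) \geq 0$ for all $v \in H^1(\mathbb{R}^3)$, with equality precisely on $\operatorname{span}\{r\}$.

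Next I would upgrade this nonnegativity to strict positivity of the infimum of the Rayleigh quotient on the constraint. The quantity $\nu := \inf\{ (L_- v, v) / \|v\|^2 : v \neq 0,\ (v,r)_{H^1} = 0 \}$ is $\geq 0$ by the previous step; the goal is $\nu > 0$. The approach is the usual one: take a minimizing sequence $\{v_n\}$, normalized by $\|v_n\| = 1$. By Lemma~\ref{4boundedness} the potential term $\int (|x|^{-1}*r^2) v_n^2$ is controlled by $C\|v_n\|_2^2 \leq C$, so $(L_- v_n, v_n)$ bounded forces $\|\nabla v_n\|_2$ bounded; hence $\{v_n\}$ is bounded in $H^1$ (it already is, by normalization) and, up to a subsequence, $v_n \rightharpoonup v$ in $H^1$, with $v$ still satisfying $(v,r)_{H^1} = 0$ since this is a bounded linear constraint. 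By the compactness statement \eqref{prima} of Lemma~\ref{convNonl} (applied with $r$ in the role there, using $|x|^{-1}*r^2 \in L^6$), $\int (|x|^{-1}*r^2) v_n^2 \to \int (|x|^{-1}*r^2) v^2$, while $\|v\|^2 \leq \liminf \|v_n\|^2 = 1$ and $\|\nabla v\|_2^2 \leq \liminf \|\nabla v_n\|_2^2$ by weak lower semicontinuity. Therefore $(L_- v, v) \leq \liminf (L_- v_n, v_n) = \nu$. If $v \neq 0$ then $\nu \geq (L_- v,v)/\|v\|^2 > 0$ unless $(L_- v, v) = 0$, which by the simplicity of the kernel forces $v \in \operatorname{span}\{r\}$, contradicting $(v,r)_{H^1} = 0$ together with $v\neq 0$ (note $(r,r)_{H^1} = \|r\|^2 > 0$). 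So it remains only to rule out $v = 0$.

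The case $v = 0$ is where the main obstacle lies, and it is handled exactly as in the proof of Lemma~\ref{positconstr}: if $v_n \rightharpoonup 0$, then $\int (|x|^{-1}*r^2) v_n^2 \to 0$, so $(L_- v_n, v_n) = \|v_n\|^2 - \tfrac12\|v_n\|_2^2 - \int(|x|^{-1}*r^2)v_n^2 + \tfrac12 \|v_n\|_2^2$; more directly, $(L_- v_n, v_n) = \tfrac12\|\nabla v_n\|_2^2 + \|v_n\|_2^2 - \int (|x|^{-1}*r^2) v_n^2 \to \nu$, and since the last integral tends to $0$ we get $\tfrac12\|\nabla v_n\|_2^2 + \|v_n\|_2^2 \to \nu$. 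But $\|v_n\|^2 = \tfrac12\|\nabla v_n\|_2^2 + \|v_n\|_2^2 = 1$ for all $n$ (up to the precise constants in the $H^1$-norm the paper uses), so $\nu = 1 > 0$, and we are done. Thus in every case $\nu > 0$. The only genuinely delicate point — and the step I would flag as the main obstacle — is the assertion that the bottom eigenvalue of $L_-$ is simple and equals zero: this needs the positivity of $r$ and the ground-state property, and I would either cite \cite{Lieb} / the uniqueness-of-ground-state results invoked in Proposition~\ref{groundfacts}, or give the short argument that a sign-definite eigenfunction of a Schrödinger-type operator necessarily lies at the bottom of the spectrum and spans a one-dimensional eigenspace. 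Everything else is a routine concentration-compactness argument modeled on Lemma~\ref{positconstr}, using Lemmas~\ref{4boundedness} and~\ref{convNonl} for the compactness of the nonlocal term.
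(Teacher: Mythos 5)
Your proof is correct and rests on the same two pillars as the paper's: (i) nonnegativity of $L_-$ deduced from the positivity of $r$ -- the paper does this via the Berezin--Shubin results (the potential $|x|^{-1}*r^2$ vanishes at infinity, so the spectrum below $1$ is discrete and the lowest eigenvalue is simple with a positive eigenfunction), which is precisely the ``sign-definite eigenfunction sits at the bottom'' argument you flag as the delicate point -- and (ii) a minimizing-sequence argument in which the nonlocal term is controlled by Lemma~\ref{4boundedness} and passed to the limit by \eqref{prima} of Lemma~\ref{convNonl}. Where you genuinely differ is the endgame, and your version is leaner: the paper first proves coercivity with $\|v\|_2^2$ in the denominator and then upgrades to $\|v\|^2$ by the norm-equivalence argument from the end of Lemma~\ref{positconstr}; moreover it argues by contradiction ($\omega=0$), shows the minimizing sequence converges strongly in $H^1$ (which is how the vanishing weak limit is excluded there), and then introduces two Lagrange multipliers, proves both vanish, and concludes $L_-v=0$, $v\in\operatorname{span}\{r\}$, a contradiction. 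You instead normalize directly in $H^1$, split according to whether the weak limit vanishes (obtaining $\nu=1$ in the vanishing case), and replace the multiplier step by the equality case of nonnegativity: $(L_-v,v)=0$ with $L_-\geq 0$ forces $v\in\operatorname{Ker}L_-=\operatorname{span}\{r\}$ by \eqref{ker-meno}, incompatible with $(v,r)_{H^1}=0$ and $v\neq 0$. Both routes are sound; yours dispenses with the Lagrange multipliers and with the separate $L^2$-to-$H^1$ upgrade. One small slip: the inequality ``$\nu\geq (L_-v,v)/\|v\|^2$'' is stated the wrong way around (by definition of the infimum one has $\nu\leq (L_-v,v)/\|v\|^2$ for any admissible $v$); what you actually need, and have already established, is $\nu=\lim_n(L_-v_n,v_n)\geq (L_-v,v)$, which suffices to give $\nu>0$ whenever $(L_-v,v)>0$.
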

\begin{proof}
Up to arguing as in the end of the proof of Lemma~\ref{positconstr}, it is enough to prove that
$$
\inf\limits_{v\neq 0, \; (v,r)_{H^{1}}=0}
	\frac{\left(L_{-}v,v\right)}{\|v\|^{2}_{2}}>0.
$$
Let us first prove that $L_-$ is nonnegative. 
Since $|x|^{-1}\in L^{3-\delta}(\mathbb{R}^3)+L^{3+\delta}(\mathbb{R}^3)$ and 
$r^2\in L^{(3-\delta)'}(\mathbb{R}^3)\cap L^{(3+\delta)'}(\mathbb{R}^3)$ for $\delta>0$ 
small, by applying \cite[Lemma 2.20]{liebloss} 
we have that $x\mapsto |x|^{-1} * r^2$ goes to zero for $|x|\to\infty$ and so
\[
\lim_{|x|\to \infty} \big[1-(|x|^{-1} * r^2)\big]
=1.
\]
In turn, from~\cite[Theorem 3.1, p.165]{BerSchub}, we learn 
that $L_-$ is bounded from 
below and has a discrete spectrum over $(-\infty, 1)$ which 
consists of eigenvalues of finite multiplicity.
Moreover $r\in \operatorname{Ker} L_-$ and so $0$ is an 
eigenvalue of $L_-$
and $r$ is a corresponding eigenfunction.
But, from \cite[Theorem 3.4, p.179]{BerSchub}, since the smallest 
eigenvalue of 
$L_-$ is lower than $1$, then it is simple and the 
corresponding 
eigenfunction can be chosen to be positive everywhere. The 
positivity of
$r$ implies that $0$ is the smallest eigenvalue of 
$L_-$ and $r$ is the corresponding eigenfunction. Thus, for any 
$v \in H^1(\mathbb{R}^3)$, we have that $(L_- v,v)\geq 0$. 
Let us consider 
$$
\omega:=\inf_{(v,r)_{H^1}=0,\,\|v\|_2=1} (L_- v,v),
$$
and assume by contradiction that $\omega=0$. Let $\{v_n\}\subset H^1(\mathbb{R}^3)$ be a 
minimizing sequence. By virtue of \eqref{eq:ec1}, 
it follows that $\{v_n\}$ is bounded in 
$H^1(\mathbb{R}^3)$. Then there exists $v\in H^1(\mathbb{R}^3)$
such that, up to a subsequence, $v_n \rightharpoonup v$ in $H^1(\mathbb{R}^3)$ and $(v,r)_{H^1}=0$.
In turn, in light of~\eqref{prima}, we deduce that
\begin{equation*}
0 \leq  (L_- v, v)  \leq \liminf_n \Big( \|v_n \|^2
- \int(|x|^{-1} * r^2) v_n^2 \Big) 
 =  \lim_n (L_- v_n, v_n) = 0,
\end{equation*}
so that $(L_- v, v)=0$. Hence, we obtain
\begin{align*}
\|v \|^2 \leq & \liminf_n  \|v_n \|^2 
\leq \limsup_n  \|v_n \|^2 
=  \lim_n \Big( (L_- v_n, v_n) 
+ \int (|x|^{-1} * r^2) v_n^2\Big)\\ 
= & (L_- v, v) 
+ \int (|x|^{-1} * r^2) v^2
= \|v \|^2.
\end{align*}
Then $\{v_n\}$ converges to $v$ in $H^1(\mathbb{R}^3)$ which implies that $\|v\|_2=1$ 
and $v$ solves the minimization problem.
%Notice also that the constraint $(v,r)_{H^1}=0$ is equivalent to
%\begin{equation}
%	\label{constr-nonl}
%\int (|x|^{-1}*r^2)rv=0.
%\end{equation}
In turn, there exist two Lagrange multipliers $\lambda,\mu\in\mathbb{R}$ 
such that, for every $\eta\in H^1(\mathbb{R}^3)$, it holds
\begin{equation}
	\label{constr-eq}
(L_-v,\eta)=\lambda (v,\eta)+\mu (r,\eta)_{H^1}
%\int (|x|^{-1}*r^2)ru.
\end{equation}
Then, dropping $\eta=v$ into~\eqref{constr-eq} immediately yields $\lambda=0$, so that, 
for any $\eta\in H^1(\mathbb{R}^3)$,
\begin{equation}
	\label{constr-eqbis}
(L_-v,\eta)=\mu (r,\eta)_{H^1} .
%\int (|x|^{-1}*r^2)rv.
\end{equation}
Finally, by choosing now $\eta=r$ into equation~\eqref{constr-eqbis}, and recalling that $L_- r=0$ yields
$$
0=(v,L_-r)=(L_- v,r)=\mu \| r\|^2
%\iint\frac{r^2(x)r^2(y)}{|x-y|}dxdy,
$$
where we used the fact that $L_-$ is self-adjoint.
Then $\mu=0$, namely $L_- v=0$. In light of~\eqref{ker-meno}, there is
$\vartheta\in\mathbb{R}\setminus\{0\}$ with $v=\vartheta r$.
Thus $0= \vartheta \| r\|^2$ %which is not compatible with \eqref{constr-nonl}:
that is a contradiction. Then $\omega>0$ and the proof is complete.
\end{proof}

\noindent
For the proof of Theorem~\ref{thm:enconv} we shall also need the following

\begin{lemma}
\label{existence-inf}
Let $\phi\in H^1(\mathbb{R}^3,\mathbb{C})$ with 
$\| \phi \|_2 = \| r \|_2$ and $\inf\limits_{x\in\mathbb{R}^3,\,\theta\in [0,2\pi)}
\| \phi - e^{i\theta} r(\cdot - x) \|\leq \|r\|$. Then
$
\inf\limits_{x\in\mathbb{R}^3,\,\theta\in [0,2\pi)}
\| \phi - e^{i\theta} r(\cdot - x)\|^2
$
is achieved at some $x_0\in\mathbb{R}^3$ and $\gamma\in [0,2\pi)$.
\end{lemma}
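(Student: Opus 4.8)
The plan is to run the direct method in the pair of parameters $(x,\theta)$, handling the phase by compactness of $[0,2\pi]$ and the translation by a concentration argument. Write $m:=\inf_{x\in\mathbb{R}^3,\,\theta\in[0,2\pi)}\|\phi-e^{i\theta}r(\cdot-x)\|^2$, which by hypothesis satisfies $m\le\|r\|^2$, and fix a minimizing sequence $(x_n,\theta_n)\in\mathbb{R}^3\times[0,2\pi)$, so that $\|\phi-e^{i\theta_n}r(\cdot-x_n)\|^2\to m$. Since $[0,2\pi]$ is compact, I would first pass to a subsequence along which $\theta_n\to\gamma$, reducing $\gamma$ modulo $2\pi$ so that $\gamma\in[0,2\pi)$ and $e^{i\theta_n}\to e^{i\gamma}$.

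The heart of the matter is to show that $\{x_n\}$ stays bounded, and this is precisely where the smallness assumption $\inf_{x,\theta}\|\phi-e^{i\theta}r(\cdot-x)\|\le\|r\|$ enters. Suppose, for contradiction, that $|x_n|\to\infty$ along a subsequence. Since $r\in H^1(\mathbb{R}^3)$ and its translates carry their mass off to infinity, one has $r(\cdot-x_n)\rightharpoonup 0$ in $H^1(\mathbb{R}^3,\mathbb{C})$: testing against $\psi\in C_c^\infty$, the integrals of $\psi\,\overline{r(\cdot-x_n)}$ and $\nabla\psi\cdot\overline{\nabla r(\cdot-x_n)}$ over a fixed ball tend to $0$ by the $L^2$-tail estimate for $r$ and $\nabla r$, and density together with $\|r(\cdot-x_n)\|=\|r\|$ upgrades this to weak convergence. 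Expanding the norm and using $\|r(\cdot-x_n)\|=\|r\|$,
\[
\|\phi-e^{i\theta_n}r(\cdot-x_n)\|^2=\|\phi\|^2+\|r\|^2-2\,\mathrm{Re}\bigl(e^{-i\theta_n}(\phi,r(\cdot-x_n))_{H^1}\bigr)\longrightarrow\|\phi\|^2+\|r\|^2,
\]
since $(\phi,r(\cdot-x_n))_{H^1}\to0$. Because $\|\phi\|_2=\|r\|_2>0$ forces $\phi\ne0$, the limit equals $\|\phi\|^2+\|r\|^2>\|r\|^2\ge m$, contradicting $\|\phi-e^{i\theta_n}r(\cdot-x_n)\|^2\to m$. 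Hence $\{x_n\}$ is bounded and, up to a further subsequence, $x_n\to x_0$ in $\mathbb{R}^3$.

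To conclude, I would invoke the strong continuity of the translation group on $L^2(\mathbb{R}^3)$, applied separately to $r$ and to $\nabla r$ (both in $L^2$): this gives $r(\cdot-x_n)\to r(\cdot-x_0)$ in $H^1$, hence $e^{i\theta_n}r(\cdot-x_n)\to e^{i\gamma}r(\cdot-x_0)$ in $H^1$, and therefore $\|\phi-e^{i\theta_n}r(\cdot-x_n)\|^2\to\|\phi-e^{i\gamma}r(\cdot-x_0)\|^2$. Since the left-hand side also tends to $m$, we obtain $m=\|\phi-e^{i\gamma}r(\cdot-x_0)\|^2$, i.e. the infimum is attained at $(x_0,\gamma)$. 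The only genuinely nontrivial point is the boundedness of the translation sequence; everything else is the routine direct method, with the hypothesis on the infimum playing exactly the role of ruling out the escape of the optimal translation to infinity.
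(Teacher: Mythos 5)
Your proof is correct and takes essentially the same route as the paper: both rest on expanding $\|\phi-e^{i\theta}r(\cdot-x)\|^2$ and using the weak vanishing of the translates $r(\cdot-x)$ (and $\nabla r(\cdot-x)$) as $|x|\to\infty$ to see that the value at infinity equals $\|\phi\|^2+\|r\|^2>\|r\|^2\ge$ the infimum, so the minimization is effectively confined to a bounded set of translations, where compactness of the phase and continuity give attainment. The paper packages this as continuity of $\Upsilon(x,\theta)$ plus restriction to a compact set $\bar B_R(0)\times[0,2\pi]$ instead of a minimizing-sequence argument, but the substance is identical.
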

\begin{proof}
Consider the function $\Upsilon:\mathbb{R}^3\times [0,2\pi)\to\mathbb{R}$ defined by setting
$$
\Upsilon(x,\theta)=\| \phi-e^{i\theta} r(\cdot-x)\|^2
%+\| \nabla \phi(\cdot)-e^{i\theta}\nabla r(\cdot -x)\|_2^2.
$$
It is readily checked that $\Upsilon$ is continuous. Moreover,
since $\| \phi \|_2 = \| r \|_2$, we get
$$
\Upsilon(x,\theta)=2\|r\|_2^2+\frac{1}{2}\| \nabla r\|_2^2
-2\mathfrak{Re} \int e^{i\theta}\bar \phi(y)r(y-x)dy - \mathfrak{Re}\int e^{i\theta}\nabla\bar \phi(y)\cdot \nabla r(y-x)dy
+\frac{1}{2}\| \nabla \phi\|_2^2.
$$
Taking into account that the families of functions
$(r(\cdot-x))_{x\in\mathbb{R}^3}$ and $(\nabla r(\cdot-x))_{x\in\mathbb{R}^3}$ are bounded in $L^2(\mathbb{R}^3)$
and converge pointwise (almost everywhere) to zero as $|x|\to\infty$, it follows that they converge 
weakly to zero in $L^2(\mathbb{R}^3)$ as $|x|\to\infty$. In turn, it readily follows that, for any $\theta\in[0,2\pi)$,
$$
\lim_{|x|\to\infty}\Upsilon(x,\theta)=2\|r\|_2^2+\frac{1}{2}\| \nabla r\|_2^2
+\frac{1}{2}\| \nabla \phi\|_2^2>\|r\|^2.
$$
On the other hand, in light of the second assumption on the function
$\phi$, for every $\delta>0$, there exist points $\tilde x\in\mathbb{R}^3$
and $\tilde\theta\in [0,2\pi)$ such that $\Upsilon(\tilde x,\tilde\theta)\leq \|r\|^2+\delta$.
It follows that the infimum of $\Upsilon$ over the unbounded set $\mathbb{R}^3\times [0,2\pi)$ coincides
with the infimum of $\Upsilon$ over the compact set $\bar{B}_R(0)\times [0,2\pi]$ for every $R>0$ sufficiently large,
yielding in turn the desired conclusion.
\end{proof}

\begin{proof}[Proof of Theorem \ref{thm:enconv} concluded]
Let $\phi\in H^1(\mathbb{R}^3,\mathbb{C})$ be a function such that
$\| \phi \|_2 = \| r \|_2$ and
$\inf_{x\in\mathbb{R}^3,\,\theta\in [0,2\pi)}
\| \phi - e^{i\theta} r(\cdot - x) \|\leq \|r\|.$ 
In light of Lemma~\ref{existence-inf}
there exist $x_0 \in \mathbb{R}^3,\gamma\in[0,2\pi)$ with
$$
\inf\limits_{x\in\mathbb{R}^3,\,\theta\in [0,2\pi)}
\| \phi - e^{i\theta} r(\cdot - x)\|^2=\| \phi - e^{i\gamma} r(\cdot - x_0)\|^2.
$$
Let us set 
$w(x):=e^{-i\gamma}\phi(x+x_0)-r(x)$.
Denoting by $u$ and $v$ respectively the real 
and the imaginary part of $w$, we claim that 
$u$ satisfies $(u,\Xi_j(r))=0$ for $j=1,2,3$
%{\color{blue} oppure
%\[
%\left(u,\left(\frac{1}{|x|} * |r |^2\right) \partial_{x_j} r 
%+ 2 \left(\frac{1}{|x|} * (r\partial_{x_j} r)\right)r\right)=0
%\]
%}
and $(v,r)_{H^1}=0$.
Indeed, if, as in the proof of Lemma \ref{existence-inf}, for any $\phi \in H^1(\mathbb{R}^3,\mathbb{C})$, 
$x\in\mathbb{R}^3$, $\theta\in \mathbb{R}$ we consider
\begin{align*}
\Upsilon(x,\theta)=&\| \phi-e^{i\theta} r(\cdot-x)\|^2\\
%=&\|\phi\|^2 + \| r \|^2
%-2\mathfrak{Re} \int e^{i\theta}\bar \phi(y)r(y-x)dy 
%- \mathfrak{Re}\int e^{i\theta}\nabla\bar \phi(y)
%\cdot \nabla r(y-x)dy\\
=&\|\phi\|^2 + \| r \|^2
-2\mathfrak{Re} \int e^{i\theta}\bar \phi(y)
\Big(-\frac{1}{2}\Delta r + r\Big)(y-x)dy \\
=&\|\phi\|^2 + \| r \|^2
-2\mathfrak{Re} \int e^{i\theta}\bar \phi(y)
[(|\cdot |^{-1} * r^2)r](y-x)dy ,
\end{align*}
we have
\[
\frac{\partial \Upsilon}{\partial x_j} (x,\theta)
=2\mathfrak{Re} \int e^{i\theta}\bar \phi(y+x)
\Xi_j(r)(y)dy
\quad\hbox{and}\quad
\frac{\partial \Upsilon}{\partial \theta} (x,\theta)
=2\mathfrak{Im} \int  e^{i\theta}\bar \phi(y+x)
\Big(-\frac{1}{2}\Delta r + r\Big)(y)dy.
\]
If $x=x_0$ and $\theta=\gamma$, since
$e^{i\gamma}\bar \phi(\cdot+x_0)=\bar{w} + r$, 
$\partial_{x_j} \Upsilon(x_0,\gamma) = 0$
and $\partial_\theta \Upsilon (x_0,\gamma) = 0$,
using \eqref{eq:xir}, we get the orthogonality conditions.
Then we consider the action 
$I(\phi)= \mathcal{E} (\phi) + \|\phi\|_2^2$
and we control the norm of $w$ in terms of the difference
$ I(\phi) - I(r)$.
Using the scale invariance of $I$, 
recalling that $\langle I'(r),w \rangle = 0$
and using also
\begin{align*}
\langle I''(\zeta)\varsigma,\varsigma\rangle
= & 2 \| \varsigma \|^2
-2 \int (|x|^{-1} * |\zeta|^2 ) |\varsigma|^2\\
& -4 \Big[
\int (|x|^{-1} * \mathfrak{Re}\zeta\mathfrak{Re}\varsigma )
\mathfrak{Re}\zeta\mathfrak{Re}\varsigma
+2 \int (|x|^{-1} * \mathfrak{Re}\zeta\mathfrak{Im}\varsigma )
\mathfrak{Im}\zeta\mathfrak{Re}\varsigma
+\int (|x|^{-1} * \mathfrak{Im}\zeta\mathfrak{Im}\varsigma )
\mathfrak{Im}\zeta\mathfrak{Im}\varsigma
\Big]
\end{align*}
for $\zeta,\varsigma\in H^1({\mathbb R}^3,\mathbb{C})$,
the orthogonality conditions proved before,
Propositions \ref{prop:L+} and \ref{prop:L-}, and the 
Hardy-Littlewood-Sobolev inequality we have
\begin{align*}
I(\phi) - I(r) = & I(r + w) - I(r) 
= \langle I'(r),w \rangle 
+ \frac{1}{2}\langle I''(r+\vartheta w)w,w \rangle\\
= & \| w \|^2 
- \int (|x|^{-1} * |r+\vartheta w|^2) |w|^2
- 2 \int (|x|^{-1} * (r+\vartheta u)u) 
(r+\vartheta u)u\\
&
- 2 \vartheta^2 \int (|x|^{-1} * v^2 ) v^2 
- 4 \vartheta \int (|x|^{-1} * (r+\vartheta u)v)uv\\
= &
(L_+ u,u) + (L_- v, v) 
- 2 \vartheta \int (|x|^{-1} * r u)|w|^2
- \vartheta^2 \int (|x|^{-1} * |w|^2)|w|^2\\
&
- 4 \vartheta \int (|x|^{-1} * r u)u^2
- 2 \vartheta^2 \int (|x|^{-1} * u^2)u^2
- 2 \vartheta^2 \int (|x|^{-1} * v^2)v^2\\
&
- 4 \vartheta \int (|x|^{-1} * r v)u v
- 4 \vartheta^2 \int (|x|^{-1} * u v )u v\\
\geq &
D \| w \|^2 - D_1 \| w \|^4 - D_2 \| w \|^3,
\end{align*}
which concludes the proof.
\end{proof}

\section{Proof of Theorem~\ref{thm:dyn}}
\label{pf-del-thm2}

\subsection{Preliminary results}
Let $u^\eps$ be a solution of the Cauchy problem \eqref{eq:CP}. 
The energy is defined as
$$
E_\eps(t)=\frac{1}{2\eps}\int |\nabla u^\eps(t,x)|^2+\frac{1}{\eps^3}\int V(x)|u^\eps(t,x)|^2-
\frac{1}{2\eps^5}\iint \frac{|u^\eps(t,x)|^2|u^\eps(t,y)|^2}{|x-y|}
$$
and $E_\eps(t)=E_\eps(0)$ for every $t\geq 0$.
Moreover the mass conservation reads as
\[
\frac{1}{\eps^3} \int |u^\eps(t,x)|^2
=\|r\|_2^2=:m,
\qquad t\geq 0, \;\eps>0.
\] 
For both conservations we refer the reader to \cite[Theorem 4.3.1]{cazenave}.
Setting
$$
{\mathcal H}(t):=\frac{1}{2}m|v(t)|^2+mV(x(t)),\quad t\geq 0,
$$
from system \eqref{eq:dynsys2} it follows  that ${\mathcal H}(t)={\mathcal H}(0)$, for all $t>0$. We have the following

\begin{lemma}
\label{espansione-energia}
$E_\eps(t)={\mathcal E}(r)+{\mathcal H}(t)+{\mathcal O}(\eps^2)$
for all $t\in [0,\infty)$ and $\eps>0$.
\end{lemma}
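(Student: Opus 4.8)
The plan is to reduce everything to time $t=0$. Since the two conservation laws already recalled give $E_\eps(t)=E_\eps(0)$ and ${\mathcal H}(t)={\mathcal H}(0)$ for every $t\geq 0$, it suffices to prove
\[
E_\eps(0)={\mathcal E}(r)+{\mathcal H}(0)+{\mathcal O}(\eps^2).
\]
To this end I would insert the explicit initial datum $u^\eps(0,x)=r\big((x-x_0)/\eps\big)e^{\frac{i}{\eps}x\cdot v_0}$ into the three terms defining $E_\eps(0)$ and rescale each of them by the change of variables $x=x_0+\eps y$ (and $x=x_0+\eps\xi$, $y=x_0+\eps\eta$ in the double integral).

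For the kinetic term, writing $\nabla u^\eps(0,x)=\eps^{-1}(\nabla r+iv_0 r)\big((x-x_0)/\eps\big)e^{\frac{i}{\eps}x\cdot v_0}$ and noting that the cross term $\operatorname{Re}(\nabla r\cdot\overline{iv_0 r})$ vanishes because $r$ and $\nabla r$ are real, one gets $|\nabla u^\eps(0,x)|^2=\eps^{-2}\big(|\nabla r|^2+|v_0|^2 r^2\big)\big((x-x_0)/\eps\big)$; after rescaling this yields \emph{exactly} $\frac{1}{2\eps}\int|\nabla u^\eps(0)|^2=\frac12\|\nabla r\|_2^2+\frac12 m|v_0|^2$, which matches $\frac12\|\nabla r\|_2^2+\frac12 m|v(0)|^2$ since $v(0)=v_0$. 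The Hartree term also rescales exactly: the factor $|x-y|^{-1}$ contributes one power of $\eps^{-1}$, $dx\,dy$ contributes $\eps^{6}$ and the prefactor is $\eps^{-5}$, so
\[
-\frac{1}{2\eps^5}\iint\frac{|u^\eps(0,x)|^2|u^\eps(0,y)|^2}{|x-y|}\,dx\,dy=-\frac12\iint\frac{r^2(\xi)r^2(\eta)}{|\xi-\eta|}\,d\xi\,d\eta,
\]
which is precisely the nonlocal part of ${\mathcal E}(r)$. The only term producing an error is the external potential one, $\eps^{-3}\int V(x)|u^\eps(0,x)|^2=\int V(x_0+\eps y)\,r^2(y)\,dy$.

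Here I would Taylor expand $V=V_1+V_2$ around $x_0$. The first order term $\eps\,\nabla V(x_0)\cdot\int y\,r^2(y)\,dy$ vanishes because $r$ is radial (Proposition~\ref{groundfacts}), so the integrand is odd; the second order remainder is bounded by $\big(\|D^2V_1\|_\infty+\|D^2V_2\|_\infty\big)$, finite under the hypotheses $V_1\in{\mathcal C}^3(\mathbb{R}^3)$ and $D^2V_2\in C^2(\mathbb{R}^3)$, times $\eps^2\int|y|^2 r^2(y)\,dy$, the last integral being finite by the exponential decay of $r$ in \ref{it:wei} of Proposition~\ref{groundfacts}. Hence $\int V(x_0+\eps y)\,r^2(y)\,dy=mV(x_0)+{\mathcal O}(\eps^2)=mV(x(0))+{\mathcal O}(\eps^2)$. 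Summing the three contributions gives $E_\eps(0)={\mathcal E}(r)+{\mathcal H}(0)+{\mathcal O}(\eps^2)$, and the two conservation laws then propagate this identity to all $t$. The only genuine subtlety is the justification of the ${\mathcal O}(\eps^2)$ remainder in the potential term under the stated low regularity of $V_2$: one must observe that boundedness of $D^2V_2$ (which is what $D^2V_2\in C^2$ gives, via $|\alpha|\le 3$ in the definition of ${\mathcal C}^3$) is exactly what the second-order Taylor remainder requires, even though $V_2$ and $\nabla V_2$ themselves may be unbounded.
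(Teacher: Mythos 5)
Your proposal is correct and follows essentially the same route as the paper: plug the initial datum into $E_\eps(0)$, rescale so that the kinetic and Hartree terms reproduce $\mathcal{E}(r)+\tfrac12 m|v_0|^2$ exactly, and Taylor expand the potential term, with the first-order term vanishing by the radial symmetry of $r$ and the remainder controlled by the boundedness of $\nabla^2 V$ together with $\int |y|^2 r^2(y)\,dy<\infty$. The conservation of $E_\eps$ and $\mathcal{H}$ then transfers the identity from $t=0$ to all $t$, exactly as in the paper's proof.
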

\begin{proof}
First, we observe  that
$$
\Big|\nabla\Big(r\Big(\frac{x-x_0}{\eps} \Big) 
e^{\frac{i}{\eps} x\cdot v_0}  \Big)\Big|^2=\frac{1}{\eps^2}\big|\nabla r\Big(\frac{x-x_0}{\eps} \Big)\Big|^2 
+\frac{|v_0|^2}{\eps^2}r^2\Big(\frac{x-x_0}{\eps} \Big). 
$$
Then, by the conservation of energy $E_\eps$, for any $t\in [0,\infty)$ and $\eps>0$, there holds
\begin{align*}
E_\eps(t)= E_\eps(0)&=\frac{1}{2\eps^3}\int \big|\nabla r\Big(\frac{x-x_0}{\eps} \Big)\Big|^2
+\frac{|v_0|^2}{2\eps^3}\int r^2\Big(\frac{x-x_0}{\eps} \Big) \\
& +\frac{1}{\eps^3}\int V(x)r^2\Big(\frac{x-x_0}{\eps}\Big)-
\frac{1}{2\eps^5}\iint \frac{r^2\big(\frac{x-x_0}{\eps}\big)r^2\big(\frac{y-x_0}{\eps}\big)}{|x-y|} \\
&=\frac{1}{2}\int |\nabla r|^2+\frac{1}{2}m|v_0|^2 
 +\int V(x_0+\eps x)r^2(x)-
\frac{1}{2}\iint \frac{r^2(x)r^2(y)}{|x-y|} \\
&={\mathcal E}(r)+\mathcal{H}(t)   
+\int V(x_0+\eps x)r^2(x) - m V(x_0).
%={\mathcal E}(r)+\frac{1}{2}m|v_0|^2 
% +\int V(x_0+\eps x)r^2(x)
\end{align*}
Taking into account that, since $\nabla^2 V$ is bounded
and 
\[
\int x \nabla V(x_0) r^2(x)=0,
\]
we have 
$$
\int V(x_0+\eps x)r^2(x)-mV(x_0)={\mathcal O}(\eps^2)
$$
and the assertion immediately follows.
% from ${\mathcal H}(t)={\mathcal H}(0)$, for all $t>0$.
\end{proof}

\noindent
Moreover we have

\begin{lemma}
\label{normestim} 
There exists $C>0$ such that
$\|\nabla u^\eps(t)\|_2\leq C\sqrt{\eps}$
for all $t\in [0,\infty)$ and $\eps>0$.
\end{lemma}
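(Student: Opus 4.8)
The plan is to read off the bound from the two conservation laws combined with Lemma~\ref{espansione-energia}. Solving the definition of $E_\eps(t)$ for the kinetic term and using that $E_\eps(t)=E_\eps(0)$, one has
\[
\frac{1}{2\eps}\|\nabla u^\eps(t)\|_2^2
= E_\eps(0) - \frac{1}{\eps^3}\int V(x)|u^\eps(t,x)|^2
+ \frac{1}{2\eps^5}\iint\frac{|u^\eps(t,x)|^2|u^\eps(t,y)|^2}{|x-y|}.
\]
By Lemma~\ref{espansione-energia} and the constancy of $\mathcal H(t)=\mathcal H(0)$, the quantity $E_\eps(0)=\mathcal E(r)+\mathcal H(0)+\mathcal O(\eps^2)$ is bounded uniformly in $t\geq 0$ and in $\eps$ (small). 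Since $V=V_1+V_2$ with $V_1$ bounded and $V_2$ bounded from below, we have $V\geq -C$, and the mass conservation $\frac{1}{\eps^3}\|u^\eps(t)\|_2^2=m$ gives
\[
-\frac{1}{\eps^3}\int V(x)|u^\eps(t,x)|^2 \leq \frac{C}{\eps^3}\int |u^\eps(t,x)|^2 = Cm.
\]

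The only delicate term is the Hartree one, which carries the singular prefactor $\eps^{-5}$. Here I would invoke the Hardy--Littlewood--Sobolev inequality followed by Gagliardo--Nirenberg in $\mathbb{R}^3$:
\[
\iint\frac{|u(x)|^2|u(y)|^2}{|x-y|}\,dx\,dy
\leq C\big\||u|^2\big\|_{6/5}^2 = C\|u\|_{12/5}^4 \leq C\|u\|_2^3\|\nabla u\|_2,
\]
the last step using $\|u\|_{12/5}\leq C\|u\|_2^{3/4}\|\nabla u\|_2^{1/4}$. Applying this to $u=u^\eps(t)$ and inserting $\|u^\eps(t)\|_2^2=m\eps^3$ yields
\[
\frac{1}{2\eps^5}\iint\frac{|u^\eps(t,x)|^2|u^\eps(t,y)|^2}{|x-y|}
\leq \frac{C}{\eps^5}(m\eps^3)^{3/2}\|\nabla u^\eps(t)\|_2
= \frac{C}{\sqrt{\eps}}\,\|\nabla u^\eps(t)\|_2.
\]
This is exactly where the \emph{subcriticality} of the Hartree nonlinearity (the exponent $12/5<6$) is essential, playing the role that the restriction $0<p<2/3$ plays in the local model \eqref{local-eq}.

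Collecting the three estimates and multiplying through by $2\eps$, we obtain
\[
\|\nabla u^\eps(t)\|_2^2 \leq C\eps + C\sqrt{\eps}\,\|\nabla u^\eps(t)\|_2 .
\]
Reading this as a quadratic inequality in $X:=\|\nabla u^\eps(t)\|_2$, i.e. $X^2 - C\sqrt{\eps}\,X - C\eps\leq 0$ (equivalently: either $C\sqrt{\eps}\,X\leq C\eps$ or $X^2\leq 2C\sqrt{\eps}\,X$), one concludes $X\leq C\sqrt{\eps}$, which is the assertion, with a constant independent of $t$. The main (and essentially only) obstacle is the Hartree estimate: one must verify that the power of $\eps$ produced by Gagliardo--Nirenberg and mass conservation compensates the $\eps^{-5}$ prefactor down to a single negative half-power $\eps^{-1/2}$ multiplying $\|\nabla u^\eps\|_2$, so that the resulting inequality is self-improving and closes.
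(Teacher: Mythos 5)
Your proof is correct and follows essentially the same route as the paper: bound the kinetic term via energy and mass conservation together with Lemma~\ref{espansione-energia} and the lower bound on $V$, estimate the Hartree term by Hardy--Littlewood--Sobolev and Gagliardo--Nirenberg to get a $C\sqrt{\eps}\,\|\nabla u^\eps(t)\|_2$ contribution, and close the resulting quadratic inequality in $\|\nabla u^\eps(t)\|_2$.
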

\begin{proof}
Taking into account that $V$ is bounded from below, that $E_\eps(0)$ is bounded with respect to $\eps$ by Lemma~\ref{espansione-energia}
and the energy and mass are conserved quantities, there exists a positive constant $C$ independent of $\eps$ such that,
for all $t\in [0,\infty)$ and $\eps>0$,
$$
\| \nabla u^\eps(t)\|_2^2
%\int |\nabla u^\eps(t,x)|^2dx
\leq \eps C+
\frac{1}{\eps^4}\iint \frac{|u^\eps(t,x)|^2|u^\eps(t,x)|^2}{|x-y|}.
$$
Now, by the Hardy-Littlewood and Gagliardo-Nirenberg inequalities yields
$$
\frac{1}{\eps^4}\iint \frac{|u^\eps(x,t)|^2|u^\eps(y,t)|^2}{|x-y|}
\leq \frac{C}{\eps^4}\|u^\eps(t)\|_{12/5}^4\leq 
C\sqrt{\eps}\left[\frac{\|u^\eps(t)\|_2^2}{\eps^3}\right]^{\frac{3}{2}}\|\nabla|u^\eps(t)|\|_2
\leq C\sqrt{\eps}\|\nabla u^\eps(t)\|_2.
$$
By combining the above inequalities 
%and using Young's inequality 
the assertion follows.
\end{proof}

\noindent
First of all, let us define the momentum
\[
p^\eps (t,x):=\frac{1}{\eps^2} \mathfrak{Im} (\bar{u}^\eps(t,x) \nabla u^\eps(t,x)), \quad x\in{\mathbb R}^3,\,\,\, t\in [0,\infty).
\]

\begin{lemma}
\label{identities}
The following identities hold
\[
\begin{array}{lll}
\displaystyle\partial_t \frac{|u^\eps(t,x)|^2}{\eps^3}=-\operatorname{div}(p^\eps(t,x)), & & t\in [0,\infty),\; x\in{\mathbb R}^3, \\
& & \\
\displaystyle \partial_t \int  p^\eps (t,x)
= 
- \frac{1}{\eps^3}\int \nabla V(x) |u^\eps(t,x)|^2,
 & & t\in [0,\infty).\end{array}
\]
\end{lemma}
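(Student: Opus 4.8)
The plan is to derive both identities directly from the equation in \eqref{eq:CP} by the standard formal computation (multiplier trick), justified by the regularity of $u^\eps$ coming from the global well-posedness in $H^1$ quoted from \cite{cazenave}. For the first identity I would write the Schr\"odinger equation in the form $i\eps\,\partial_t u^\eps = -\frac{\eps^2}{2}\Delta u^\eps + V u^\eps - \frac{1}{\eps^2}(|x|^{-1}*|u^\eps|^2)u^\eps$, multiply by $\bar u^\eps$, take twice the real part (equivalently, subtract the conjugate equation multiplied by $u^\eps$), and observe that the potential term $V|u^\eps|^2$ and the nonlocal term $(|x|^{-1}*|u^\eps|^2)|u^\eps|^2$ are real, hence cancel. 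One is left with
\[
\eps\,\partial_t |u^\eps|^2 = -\tfrac{\eps^2}{2}\,\mathrm{div}\big(2\,\mathfrak{Im}(\bar u^\eps\nabla u^\eps)\big)
= -\eps^2\,\mathrm{div}\,\mathfrak{Im}(\bar u^\eps\nabla u^\eps),
\]
where I use $\mathfrak{Im}(\bar u^\eps\Delta u^\eps) = \mathrm{div}\,\mathfrak{Im}(\bar u^\eps\nabla u^\eps)$. Dividing by $\eps^4$ and recalling the definition $p^\eps = \eps^{-2}\,\mathfrak{Im}(\bar u^\eps\nabla u^\eps)$ gives $\partial_t(|u^\eps|^2/\eps^3) = -\mathrm{div}\,p^\eps$, which is the first claim.

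For the second identity I would compute $\partial_t\!\int p^\eps$ componentwise. Differentiating $p^\eps_k = \eps^{-2}\,\mathfrak{Im}(\bar u^\eps\,\partial_{x_k}u^\eps)$ in $t$ and substituting $\partial_t u^\eps$ and $\partial_t\bar u^\eps$ from the equation produces, after integration by parts in $x$, a sum of three contributions: a kinetic/Laplacian term which is the divergence of a symmetric stress tensor and therefore integrates to zero over $\mathbb{R}^3$; a term coming from the external potential, which after integration by parts yields exactly $-\eps^{-3}\!\int (\partial_{x_k}V)\,|u^\eps|^2$; and a term coming from the nonlocal convolution potential. For the last term the key point is the symmetrization/antisymmetry of the kernel: writing out $\int \nabla\big(|x|^{-1}*|u^\eps|^2\big)\,|u^\eps|^2$ and using $\nabla_x|x-y|^{-1} = -\nabla_y|x-y|^{-1}$ together with Fubini, one sees that $\iint \frac{(x-y)}{|x-y|^3}|u^\eps(x)|^2|u^\eps(y)|^2\,dx\,dy$ is the integral of an odd function under $x\leftrightarrow y$, hence vanishes. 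Thus only the potential term survives, giving $\partial_t\!\int p^\eps = -\eps^{-3}\!\int \nabla V\,|u^\eps|^2$.

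The main obstacle is not the algebra but the \emph{rigorous justification} of these manipulations for merely $H^1$ solutions: differentiating under the integral sign in $t$, the integrations by parts in $x$, and the use of Fubini on the nonlocal term all require enough integrability and decay. I would handle this in the usual way: first establish the identities for smooth, rapidly decaying initial data (for which $u^\eps(t)$ is correspondingly regular, e.g. in $H^2$ or $\Sigma$), using the finite-propagation-type bounds and the mapping properties of the Riesz potential recorded in Theorem~\ref{dilieb}\ref{it:ialieb}--\ref{it:iblieb} to make sense of every term, and then pass to general $u^\eps_0\in H^1(\mathbb{R}^3,\mathbb{C})$ by the continuous dependence of $u^\eps$ on the initial datum in $H^1$ (Lemma~\ref{normestim} provides the uniform $\|\nabla u^\eps(t)\|_2$ bound needed to control the nonlocal term along the approximation). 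Alternatively, and more cleanly, one interprets both identities in the distributional sense in $(t,x)$, for which the regularity requirements are minimal and the cancellations above are exactly the ones that make the formal computation legitimate.
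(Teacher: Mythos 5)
Your computation is correct and follows essentially the same route as the paper: multiply by $\bar u^\eps$ and take the appropriate part for the local mass identity, then differentiate $\int p^\eps_j$ componentwise, observe that the kinetic terms integrate to zero, that the external potential term gives $-\eps^{-3}\int \partial_{x_j}V\,|u^\eps|^2$, and that the Hartree term vanishes by the symmetry of the Coulomb kernel (your $x\leftrightarrow y$ antisymmetrization is exactly the paper's convolution identity $\int(|x|^{-1}*|u^\eps|^2)\partial_j|u^\eps|^2=-\int(|x|^{-1}*\partial_j|u^\eps|^2)|u^\eps|^2$). The only difference is in the justification: since the initial datum $r\big(\tfrac{x-x_0}{\eps}\big)e^{\frac{i}{\eps}x\cdot v_0}$ is already smooth and in $H^2$, the paper simply invokes $u^\eps\in C([0,\infty),H^2)\cap C^1([0,\infty),L^2)$ (Cazenave, Theorem 5.2.1) and the $C^1$ regularity of $t\mapsto\int p^\eps_j$, so your density/approximation scheme for general $H^1$ data, while workable, is not needed here.
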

\begin{proof}
By multiplying the equation for $u^\eps$ by $\bar{u}^\eps$ and 
taking then the real part easily yields
the first identity, via trivial manipulations. 
Concerning the second identity, since 
$u^\eps\in C([0,\infty),H^2({\mathbb R}^3))
\cap C^1([0,\infty),L^2({\mathbb R}^3))$
by \cite[Theorem 5.2.1 and Remark 5.2.9]{cazenave} and
the map $t\mapsto \int p_j^\eps (t,x)$ 
($j=1,2,3$) is $C^1([0,\infty))$ (see e.g.\ \cite[Appendix A]{frolich1}), we have
%computing $\partial_t p^\eps$ and integrating in space yields
\begin{align*}
\partial_t\int p_j^\eps (t,x)
%= &
%\frac{1}{\eps^2} 
%\int\mathfrak{Im}(\partial_t \bar{u}^\eps \partial_j u^\eps)
%+ \frac{1}{\eps^2}
%\int \mathfrak{Im}(\bar{u}^\eps \partial_j\partial_t u^\eps)
%[cancellare]\\
=&
\frac{1}{\eps^2} 
\int\mathfrak{Im}(\partial_t \bar{u}^\eps \partial_j u^\eps)
- \frac{1}{\eps^2}
\int \mathfrak{Im}(\partial_j\bar{u}^\eps \partial_t u^\eps)
%+ \frac{1}{\eps^2}
%\int \mathfrak{Im}(\bar{u}^\eps \partial_j\partial_t u^\eps) 
%+ \frac{1}{\eps^2}
%\int \mathfrak{Im}(\partial_j(\bar{u}^\eps \partial_t u^\eps))
\\
=&
\frac{2}{\eps^2} 
\int\mathfrak{Im}(\partial_t \bar{u}^\eps \partial_j u^\eps)
\\
=&
-\frac{1}{\eps}
\int \operatorname{div} 
\left( \partial_j \mathfrak{Re}(u^\eps) 
\nabla \mathfrak{Re}(u^\eps) \right)
-\frac{1}{\eps} 
\int \operatorname{div} 
\left( \partial_j \mathfrak{Im}(u^\eps) 
\nabla \mathfrak{Im}(u^\eps) \right)
+ \frac{1}{2\eps} \int \partial_j (|\nabla u^\eps|^2)
\\
&
+\frac{1}{\eps^3} 
\int V(x)\partial_j |u^\eps|^2
- \frac{1}{\eps^5} 
\int\left(\frac{1}{|x|} * |\bar{u}^\eps|^2 \right) 
\partial_j|u^\eps|^2.
%+ \frac{1}{\eps^2}
%\int \mathfrak{Im}(\partial_j(\bar{u}^\eps \partial_t u^\eps)).
\end{align*}
The first three terms 
as well as the last one in the above identity integrate to zero. Furthermore, the integral involving the
nonlocal term is zero too, since it holds
\begin{align*}
\int\left(\frac{1}{|x|} * |\bar{u}^\eps|^2 \right) 
\partial_j|u^\eps|^2
&= 
-\int \partial_j \left(\frac{1}{|x|} * |\bar{u}^\eps|^2 \right) 
|u^\eps|^2
=
-\int  \left(\frac{1}{|x|} * \partial_j|\bar{u}^\eps|^2 \right)
|u^\eps|^2   \\
&=
-\int\left(\frac{1}{|x|} * |\bar{u}^\eps|^2 \right) 
\partial_j|u^\eps|^2.
\end{align*}
Hence, the assertion follows.
\end{proof}

\subsection{Proof of Theorem~\ref{thm:dyn} concluded}
Once Theorem~\ref{thm:enconv} holds true, the proof of Theorem~\ref{thm:dyn} 
proceeds as in \cite{Bro-Jerr,keraa}. Given $T_0>0$ to be choosen suitably small, the function
$$
\Psi^\eps(t,x):=e^{-\frac{i}{\eps}(\eps x+x(t))\cdot v(t)}u^\eps(t,\eps x+x(t))
$$
satisfies $\|\Psi^\eps(t,\cdot)\|_2=\|r\|_2$. Furthermore, taking into account 
Lemma~\ref{espansione-energia}, the conservation of ${\mathcal H}$ and the characterization
of $r$ as infimum on $\mathcal{M}$, by a direct computation we end up with
\begin{align*}
0\leq {\mathcal E}(\Psi^\eps(t))-{\mathcal E}(r) 
= & E_\eps (t) + \frac{1}{2} m|v(t)|^2-v(t)\int p^\eps(t,x)
-\frac{1}{\eps^3}\int V(x)|u^\eps(t,x)|^2-{\mathcal E}(r)\\
= & m|v(t)|^2-v(t)\int p^\eps(t,x)+m V(x(t))-\frac{1}{\eps^3}\int V(x)|u^\eps(t,x)|^2
+{\mathcal O}(\eps^2).
\end{align*}
In turn $0\leq {\mathcal E}(\Psi^\eps(t))-{\mathcal E}(r)\leq C\eta^\eps(t)+{\mathcal O}(\eps^2),$ where 
$\eta^\eps$ is defined in \cite[p.179]{keraa}
%=\sum_{i=1}^4|\eta_i^\eps(t)|$, where
%\[
%\begin{array}{lll}
%\displaystyle \eta^\eps_1(t)=\eps^{-3}\int x\chi(x)|u^\eps(t,x)|^2-mx(t),& &
%\displaystyle\eta^\eps_2(t)=\eps^{-3}\int \nabla  V_2(x)|u^\eps(t,x)|^2-m\nabla V_2(x(t)), \\
%& &\\
%\displaystyle\eta^\eps_3(t)=\int p^\eps(t,x)-m v(t),& &
%\displaystyle\eta^\eps_4(t)=\eps^{-3}\int \chi(x)V(x)|u^\eps(t,x)|^2-m\nabla V(x(t)),
%\end{array}
%\]
and satisfies $\eta^\eps(0)={\mathcal O}(\eps^2)$. By Theorem~\ref{thm:enconv}
we know that there exist $C,A>0$ such that
\[
\mathcal{E} (\phi) - \mathcal{E} (r) \geq C
\inf_{x\in\mathbb{R}^3,\,\theta\in [0,2\pi)}
\| \phi - e^{i\theta} r(\cdot - x) \|^2
\]
for any $\phi\in H^1(\mathbb{R}^3,\mathbb{C})$ such that 
$\| \phi \|_2 = \| r \|_2$, $\!\!\inf\limits_{x\in\mathbb{R}^3,\,\theta\in [0,2\pi)}
\| \phi - e^{i\theta} r(\cdot - x) \|\leq \|r\|$ and $\mathcal{E} (\phi) - \mathcal{E} (r)\leq A$.
Then, introducing
$$
T^\eps=\sup\Big\{t\in [0,T_0]: \eta^\eps(s)\leq A,\,\, \!\!\inf\limits_{x\in\mathbb{R}^3,\,\theta\in [0,2\pi)}
\| \Psi^\eps(s,\cdot) - e^{i\theta} r(\cdot - x) \|\leq \|r\|, \hbox{ for all } s\in [0,t]\Big\}
$$
and observing that $\Psi^\eps(0,x)=r(x),$ it follows that $T^\eps>0$ for any $\eps$ sufficiently small and there exist families
of functions $\theta^\eps:[0,2\pi)\to\mathbb{R}$ and $z^\eps:\mathbb{R}^3\to\mathbb{R}$ such that
$$
\Big\|u^\eps(t,x)- e^{\frac{i}{\eps}(x\cdot v(t)+\theta^\eps(t))}r\Big(\frac{x-z^\eps(t)}{\eps}\Big) \Big\|^2_{{\mathcal H}_\eps}
\leq C\eta^\eps(t)+{\mathcal O}(\eps^2),\quad\text{for all $t\in [0,T^\eps)$.}
$$
From this stage on, taking into account the mass and momentum identities of Lemma~\ref{identities}, 
the conclusion $\eta^\eps(t)\leq C\eps^2$ for all $t\in [0,T^\eps)$, and 
hence in turn for any $t\in [0,T_0]$, follows exactly as in \cite[Lemma 3.4-3.6]{Bro-Jerr}. The conclusion
of Lemma~\ref{normestim} is used in the proof of \cite[Lemma 3.5]{Bro-Jerr} to have $\|p^\eps(t)\|_1\leq C$
and choose in turn $T_0$ sufficiently small. Finally the assertion of Theorem~\ref{thm:dyn} follows by
mimicking the continuation argument exploited in \cite[p.185]{Bro-Jerr}.

\bigskip
\medskip

\bigskip
\medskip

\end{document}